\documentclass[11pt]{amsart}

\usepackage[a4paper, margin=2.5cm]{geometry}
\usepackage[expansion=false]{microtype}
\usepackage[T1]{fontenc}

\usepackage{amsmath, amssymb, amsthm, mathtools}
\usepackage{bm}

\usepackage{graphicx}
\usepackage{subcaption}
\usepackage{booktabs}
\usepackage{array}
\usepackage{multirow}
\usepackage{xcolor}
\usepackage{tikz}
\usetikzlibrary{calc, arrows.meta, positioning, fit, backgrounds}
\usepackage{listings}
\usepackage{algorithm}
\usepackage{algpseudocode}

\usepackage[numbers, sort&compress]{natbib}
\usepackage{hyperref}
\hypersetup{
    colorlinks=true,
    citecolor=blue!60!black,
    linkcolor=blue!60!black,
    urlcolor=blue!60!black
}
\usepackage{cleveref}

\theoremstyle{plain}
\newtheorem{theorem}{Theorem}[section]

\newtheorem{proposition}[theorem]{Proposition}

\theoremstyle{definition}
\newtheorem{definition}[theorem]{Definition}
\newtheorem{remark}[theorem]{Remark}

\newcommand{\R}{\mathbb{R}}
\newcommand{\bb}{\bm{b}}

\newcommand{\bx}{\bm{x}}

\newcommand{\bz}{\bm{z}}

\newcommand{\bphi}{\bm{\varphi}}
\newcommand{\dom}{\Omega}
\newcommand{\bdry}{\partial\Omega}
\newcommand{\mtil}{\tilde{m}}
\newcommand{\eps}{\varepsilon}

\newcommand{\norm}[1]{\left\lVert#1\right\rVert}
\newcommand{\snorm}[1]{\left\lvert#1\right\rvert}

\newcommand{\Lp}[1]{L^{#1}}
\newcommand{\Hone}{H^{1}}
\newcommand{\Hhalf}{H^{1/2}}
\newcommand{\Hminusone}{H^{-1}}
\newcommand{\HhalfBdry}{\Hhalf(\bdry)}

\newcommand{\Lpinn}{L_{\mathrm{pinn}}}        
\newcommand{\Lcons}{L^{\ast}_{\mathrm{c}}}    
\newcommand{\Lgam}{L^{\ast}_{\gamma}}         
\newcommand{\enorm}[1]{\vert\!\vert\!\vert #1 \vert\!\vert\!\vert}
\newcommand{\Rop}{\mathcal{R}}
\newcommand{\Lop}{\mathcal{L}}

\definecolor{cout}{RGB}{120,90,160}

\title[Consistent CutPINNs for Convection-Diffusion]{Consistent CutPINNs for Convection-Diffusion Equations on Curved Level-Set Domains}

\author{Maneesh Kumar Singh}
\address{Department of Mathematics, SRM Institute of Science and Technology,
Kattankulathur, India}
\email{maneeshs@srmist.edu.in}

\date{\today}

\keywords{physics-informed neural networks; convection-diffusion; cut
domains; level-set methods; optimal recovery; $L^{\gamma}$ interior norm}

\subjclass[2020]{65N15, 65N75, 68T07, 35J25}

\begin{document}

\begin{abstract}
We present an a priori error analysis of consistent-loss PINNs for
stationary convection-diffusion equations on curved level-set domains. The
standard mean-squared interior loss fails in the convection-dominated
regime: the solution develops an $O(\eps)$ boundary layer in which the
pointwise residual grows like $\eps^{-1}$, so the loss is dominated by the
few collocation points inside the layer and leaves the smooth bulk
unresolved. We remove this mismatch by penalising the interior residual in a
discrete $\Lp{\gamma}$ norm with $\gamma = 1 + 1/\log\mtil$, a computable
surrogate for the $\Hminusone$ stability term, and imposing the boundary
condition through a discrete $\HhalfBdry$ trace norm, which treats flat and
curved geometries uniformly. Under Besov regularity assumptions we prove a
single a priori $\Hone$ error bound, valid for all interior exponents
$\gamma \in (1,2]$, with an optimal recovery rate governed by a cut-cell
floor $1/(2\gamma)$ specific to the curved geometry. Numerical experiments
on a rectangle and a disk at $\eps = 2^{-s}$, $s \in \{2,4,6\}$, confirm the
analysis: as the layer sharpens, the $\Lp{2}$ interior loss becomes
seed-fragile while the $\Lp{\gamma}$ interior trains reliably, the interior
norm being the decisive factor in convergence.
\end{abstract}

\maketitle

\section{Introduction}
\label{sec:intro}

\subsection{Background and motivation}

The stationary convection-diffusion equation
\begin{equation}
\label{eq:cdr-intro}
    -\eps \Delta u + \bb \cdot \nabla u + c\,u = f \quad \text{in } \dom,
    \qquad u = g \quad \text{on } \bdry,
\end{equation}
on a bounded domain $\dom \subset \R^{2}$ underlies the modelling of many
transport phenomena, from the spread of a pollutant and charge transport in
semiconductors to the Oseen system obtained by linearising incompressible
flow. How the solution behaves is set by the ratio of the diffusion
coefficient $\eps$ to the convection magnitude $\norm{\bb}_{\infty}$. When
that ratio becomes small, the regime we call convection-dominated, $u$
develops thin layers near the boundary or in the interior, and approximating
it accurately has long been difficult for classical
discretisations~\cite{roos2008robust,singh2020numerical,singh2021unified} and for neural-network
solvers~\cite{krishnapriyan2021characterizing,frerichs2026loss} alike.

Among neural-network approaches, physics-informed neural networks
(PINNs)~\cite{raissi2019} hold particular appeal here: because the governing
equation is enforced at scattered points rather than over a triangulation,
no mesh is required, which spares the cost of building and repeatedly
refining a conforming grid on a geometrically intricate domain. The standard
formulation nonetheless meets two separate obstacles in the present setting.
The first is a question of norms. On a general bounded domain the
$\Lp{2}(\bdry)$ boundary penalty fails to control the $\HhalfBdry$ trace
norm that governs well-posedness in $\Hone(\dom)$, so a loss built on it is
not consistent with the functional-analytic structure of the problem. The
second is specific to convection dominance. Writing the pointwise residual
as $\Rop(v)(\bx) := -\eps \Delta v + \bb \cdot \nabla v + c v - f$, its
values reach order $\eps^{-1}$ throughout the $O(\eps)$-wide layer, so the
few collocation points of $X$ that land there dominate the squared-residual
sum. Minimising such a loss drives the network to resolve the layer while
neglecting the smooth bulk, which is exactly where the $\Hone$ error is
concentrated.

The idea of training a network to satisfy a differential equation predates
the modern formulation~\cite{lagaris1998artificial}, but it is the
collocation-based variant of~\cite{raissi2019} that has driven the recent
activity, surveyed in~\cite{cuomo2022,plankovskyy2025review}. A substantial
theory has since accumulated. Convergence for linear second-order elliptic
and parabolic equations was established in~\cite{shin2020convergence},
generalisation and approximation error were quantified
in~\cite{mishra2023estimates,de2022generic}, and a unified error-analysis
framework was given in~\cite{zeinhofer2025unified}; the numerical-analysis
perspective is reviewed in~\cite{de2024numerical}. This analysis draws on
approximation theory for neural networks~\cite{devore2021neural} and on
optimal recovery and nonlinear approximation in Besov
classes~\cite{cohen2022optimal,devore1993besov,novak2006function}, the same
machinery that underlies the consistent-loss approach we adopt below.
Methodological variants are numerous: variational and Deep Ritz
formulations~\cite{duan2022convergence}, domain-decomposition and extended
PINNs~\cite{hu2022extended}, and adaptive collocation
strategies~\cite{visser2026pacmann}. Even so, controlled comparisons with
finite element solvers~\cite{grossmann2023pinn} show that a standard PINN is
frequently less accurate per degree of freedom, most visibly when the
solution carries boundary layers or sharp gradients, which is the regime of
concern here.

\subsection{Prior work and the open intersection}
Now, we discussed the previous work related to Consistent PINN and the motivation behind this research. \\

\textbf{Consistent PINN losses on flat domains:}
The norm mismatch on the boundary was first resolved by Bonito, DeVore,
Petrova, and Siegel~\cite{bonito2025} for the unit cube. In place of the
$\Lp{2}(\bdry)$ penalty they introduced a discrete Gagliardo double sum over
the boundary collocation points, and they showed this discrete functional to
be equivalent to the continuous $\HhalfBdry$ norm over suitable Besov model
classes. A loss assembled from it inherits provable $\Hone$ recovery rates.
Their treatment was confined to elliptic problems on flat geometry, and the
interior residual was always measured in $\Lp{2}$.

\textbf{Consistent losses on curved cut domains:}
Carrying the consistent framework onto curved geometry, Singh~\cite{singh2026cutpinn}
treated bounded domains given as the negative level set of a $C^{2}$
function. The technical heart of that work is a chord-arc lemma establishing
that the discrete Gagliardo seminorm, evaluated on $m$ points spaced equally
in arc length along a $C^{2}$ Jordan curve, stays equivalent to the
continuous $\HhalfBdry$ trace norm; the resulting $\Hone$ rates match
those of~\cite{bonito2025} up to constants set by the curvature and the
chord-arc parameter. Two interior penalties were in fact defined there, one
using $\Lp{\gamma}$ with $\gamma = 1 + 1/\log\mtil$ and one using $\Lp{2}$,
but on the smooth elliptic benchmarks of that paper the two choices returned
all but identical $\Hone$ error, and the theory and the experiments of that paper
were both developed for the $\Lp{2}$ penalty. What was an immaterial
modelling choice in the elliptic case is, as we show below, the deciding
factor once convection dominates.

\textbf{PINNs for convection-dominated problems:}
The instability of standard PINNs on convection-dominated PDEs was
documented by Krishnapriyan et al.~\cite{krishnapriyan2021characterizing},
who linked it to pathologies of the optimisation landscape near the layer.
The responses in the literature pull in different directions: coupling a
PINN to an SUPG-stabilised finite element reference~\cite{cengizci2026pinn};
neural prediction of stabilisation parameters for convection-dominated
problems~\cite{yadav2024artificial,yadav2024convstabnet};
$hp$-variational PINNs in one dimension or on
rectangles~\cite{kumar2025variational,anandh2025improving,anandh2025fastvpinns};
direct PINN treatment of boundary-layer problems~\cite{raina2026application};
reweighting the loss and adapting the sample
distribution~\cite{frerichs2026loss}; and unfitted finite-element neural
schemes carrying a CutFEM-type test space~\cite{li2026}, built on aggregated
unfitted finite element technology~\cite{badia2018aggregated}. Individually
these target convection, geometry, or loss design, yet none unites a
consistent-loss formulation with a curved cut domain and a
convection-dominated regime while remaining a pure PINN.

\textbf{Consistent PINN for convective operators:}
The consistent-loss programme has by now been carried to higher-order
elliptic operators~\cite{mishra2026consistent} and to obstacle problems in
mixed form~\cite{khan2026mixed}, but its only treatment of a convective
operator to date is the consistent-PINN study of the stationary
Oseen system by Mishra and Khan~\cite{mishra2026structure}, which derives
its loss from the stability structure of a convective problem and proves
optimal recovery in $\Hone$ for the velocity together with $\Lp{2}$ for the
pressure. We share its premise, a loss read off the continuous stability
estimate of a problem carrying a first-order term, and it confirms that the
optimal-recovery apparatus of~\cite{bonito2025} survives the passage to such
operators. Two features separate it from the layer-dominated setting we
study. Its analysis and numerics sit in the balanced regime, with unit
viscosity $\nu = 1$ on the unit square, giving a P\'eclet number of order
one, smooth layer-free solutions, and an $\Lp{2}$ interior penalty
throughout; the overweighting mechanism we are concerned with simply does
not occur there, and nothing forces a departure from $\Lp{2}$. Its geometry
is moreover a flat box, so cut cells and curved-boundary trace equivalence
never enter. Pushing the consistent loss into convection dominance forces
the interior norm to change, and pushing it onto curved cut domains brings
in the trace machinery of~\cite{singh2026cutpinn}; the $\Lp{\gamma}$ handling of a
convective residual that~\cite{mishra2026structure} carries out at the
$\Lp{2}$ level is, here, precisely the part that must be reworked rather
than reused.

\textbf{The open intersection.}
Two bodies of work thus stop short of meeting. On the consistent-loss side,
\cite{bonito2025} covers only Poisson on flat geometry and
\cite{singh2026cutpinn} only Poisson on cut domains; on the
convection-diffusion PINN side,
\cite{cengizci2026pinn, kumar2025variational, anandh2025improving, frerichs2026loss}
remain on flat or rectangular domains and outside the consistent-loss
paradigm, relying instead on FEM coupling, Petrov--Galerkin variational
forms, or hand-tuned reweighting. A consistent-PINN account of
convection-diffusion appears not to have been given even on a rectangle, and
the curved cut-domain case is open as well. The present paper supplies both
at once.

\subsection{Contributions}
The main contribution of our work is listed below. 

\begin{enumerate}
    \item \textbf{First consistent-loss treatment of convection-diffusion.}
          We are not aware of any prior study that places the stationary
          convection-diffusion equation inside the consistent-PINN
          framework, on either flat or curved geometry. Existing
          consistent-loss work treats Poisson-type problems
          only~\cite{bonito2025,singh2026cutpinn}, while the convection-diffusion
          PINN literature~\cite{cengizci2026pinn,frerichs2026loss,kumar2025variational,
          anandh2025improving} operates outside the consistent-loss paradigm, using
          SUPG-stabilised hybrids, Petrov--Galerkin variational PINNs, or
          empirical loss reweighting. The present work occupies this
          intersection.

    \item \textbf{A norm comparison identifying the interior norm as the
          governing choice.} We compare three losses on a common ansatz:
          standard PINN ($\Lp{2}$ interior, $\Lp{2}$ boundary); the
          consistent loss ($\Lp{2}$ interior, discrete $\HhalfBdry$
          boundary); and the $\Lp{\gamma}$ loss ($\Lp{\gamma}$ interior with
          $\gamma = 1 + 1/\log\mtil$, discrete $\HhalfBdry$ boundary),
          across a rectangle and a disk at $\eps = 2^{-s}$,
          $s \in \{2,4,6\}$. The comparison shows that, in the
          convection-dominated regime, the consistent \emph{boundary} norm
          alone is not enough, and that the correction that matters is in
          the \emph{interior} norm. The exponent $\gamma = 1 + 1/\log\mtil$
          is the $\Hminusone$-side recovery exponent of~\cite{bonito2025},
          which we find transfers to the convection-dominated setting;
          since it is fixed a priori by the interior budget $\mtil$, it
          leaves the method with no interior-norm tuning parameter. On seeds
          that converge, the three losses are comparable in $\Hone$
          accuracy, and the separation between them is in \emph{reliability}.
          As the layer sharpens, the $\Lp{2}$-interior losses become
          seed-fragile and diverge on a growing fraction of initialisations,
          whereas $\Lp{\gamma}$ remains dependable across every seed on both
          domains. The interior norm governs whether convection-dominated
          training converges at all, not only the accuracy of a converged
          run.

    \item \textbf{An a priori $\Hone$ error bound.} We prove an a priori
          bound for the consistent CutPINN approximation
          of~\eqref{eq:cdr-intro} of the form
          \begin{equation*}
              \norm{u - v}_{\Hone(\dom)}
              \;\leq\; C(\eps^{-1}, \kappa_{\max}, \norm{\bb}_{\infty}, c)\,(\log\mtil)
              \Bigl[ \Lgam(v)^{1/2}
              + (1 + \norm{v}_{U}) R(\mtil, m) \Bigr],
          \end{equation*}
          where $R(\mtil, m) = \mtil^{-\alpha_{\gamma}} + m^{-\bar{s}+1}$
          and $\alpha_{\gamma}$ is the recovery rate of the $\Lp{\gamma}$
          interior discretisation on $\dom$. This extends the consistent-PINN
          analysis to convection-diffusion, carrying the standard
          $\eps^{-1}$ stability constant and the $\Lp{\gamma}$ interior
          step; it specialises to the rectangle when $\kappa_{\max} = 0$ and
          recovers~\cite[Theorem 7]{singh2026cutpinn} in the elliptic limit
          $\bb = 0$, $\eps = 1$, $\gamma \to 2$.
\end{enumerate}

\subsection{Outline}

\Cref{sec:formulation} states the problem and the discrete trace norm used
throughout. \Cref{sec:method} presents the three
loss variants ($\Lpinn$, $\Lcons$, $\Lgam$) and the practical training
schedule. \Cref{sec:analysis} develops the a priori bound on flat and
curved domains, drawing on results of~\cite{bonito2025,singh2026cutpinn}
as black boxes. \Cref{sec:numerics} reports three numerical experiments:
convergence in $\eps$ on the rectangle (\Cref{sec:exp-rectangle}) and on the
disk (\Cref{sec:exp-disk}); and spatial error distributions on both domains
(\Cref{sec:exp-spatial}). \Cref{sec:conclusion} closes with future
directions, including the parabolic extension.

\section{Problem Formulation}
\label{sec:formulation}

In this section, we  establish the continuous problem and the discrete approximation, on which  the
training pipeline is built from. Section~\ref{sec:cdr-pde} states the
convection-diffusion boundary-value problem on a curved cut domain and the
coefficient assumptions under which it is well posed, and records the
precise sense in which we work in the convection-dominated regime.
Section~\ref{sec:collocation} introduces the interior and boundary
collocation sets that replace the continuous data, and
Section~\ref{sec:boundary-disc} defines the discrete trace norm through
which the boundary condition is later enforced. Together these supply the
ingredients used throughout the loss construction of
Section~\ref{sec:method} and the analysis of Section~\ref{sec:analysis}.

\subsection{The convection-diffusion equation on a cut domain}
\label{sec:cdr-pde}

We begin with the continuous problem. Let $\dom \subset \R^{2}$ be a bounded
domain defined implicitly by a
$C^{2}$ level-set function $\bphi : \R^{2} \to \R$,
\begin{equation}
    \dom = \{\bx \in \R^{2} : \bphi(\bx) < 0\},
    \qquad
    \bdry = \{\bx \in \R^{2} : \bphi(\bx) = 0\},
\end{equation}
with $|\nabla\bphi| > 0$ on $\bdry$. The boundary is then a $C^{2}$ simple
closed curve with finite length $L$ and bounded maximum curvature
$\kappa_{\max}$. Consider the stationary convection-diffusion problem
\begin{equation}
\label{eq:cdr}
\left\{
\begin{aligned}
    -\eps \Delta u + \bb \cdot \nabla u + c\,u &= f \qquad \text{in } \dom, \\
                                            u &= g \qquad \text{on } \bdry,
\end{aligned}
\right.
\end{equation}
with $\eps > 0$, convection field $\bb \in C^{1}(\bar{\dom}; \R^{2})$,
reaction coefficient $c \in \Lp{\infty}(\dom)$ satisfying $c \geq 0$ and
$c - \tfrac{1}{2}\nabla\cdot\bb \geq \kappa_{0} > 0$, source $f \in
\Lp{2}(\dom)$, and Dirichlet data $g \in \HhalfBdry$. With these
hypotheses in force, \eqref{eq:cdr} is well posed and possesses a unique
weak solution $u \in \Hone(\dom)$.

Throughout, $C$ denotes a generic positive constant whose value may change
from line to line. It may depend on the domain $\dom$, its boundary length
$L$ and maximum curvature $\kappa_{\max}$, the coefficients of
\eqref{eq:cdr} through $\norm{\bb}_{\infty}$, $\norm{c}_{\infty}$ and the
coercivity parameter $\kappa_{0}$, the smoothness and integrability indices
$\bar{s}, \gamma$, the polynomial degree $r$, and the Stein extension
constant, but never on the collocation budgets $\mtil, m$, the exact
solution $u$, or the trial function $v$. We write $a \lesssim b$ for
$a \leq C b$ with such a constant, and $a \asymp b$ when $a \lesssim b$ and
$b \lesssim a$. The dependence on $\eps$ is tracked explicitly and not
absorbed into $C$, since the $\eps^{-1}$ factor in the stability constant
is central to the convection-dominated analysis.

\begin{remark}
The regime of interest in this work is convection-dominated but not
singularly perturbed in the asymptotic sense. Here $\eps$ ranges over
$2^{-s}$ for $s \in \{2, 4, 6\}$, giving a global P\'eclet number
$|\bb| L / \eps$ of order $\sim 4$ to $\sim 60$, which is solidly
convection-dominated. At the budget used in the experiments the mesh P\'eclet
$|\bb| h / \eps$ with $h \sim \mtil^{-1/2}$ stays $O(1)$, so the
$O(\eps)$-wide layer is \emph{marginally resolved} by the collocation set,
with on the order of $\mtil\eps$ points falling in it. In this regime the
failure of the $\Lp{2}$ interior loss is one of \emph{overweighting} a
layer that is resolved, not of failing to see an unresolved one, and it is
this overweighting that the $\Lp{\gamma}$ correction addresses; see
Remark~\ref{rem:l2-fails}. The constant in the main a priori bound scales
as $\eps^{-1}$, which is exactly the $\Hone$ stability constant classical
convection-diffusion theory assigns to this regime~\cite{roos2008robust},
so the bound is the natural one for the norm and the regime studied here;
$\eps$-uniformity in the singular-perturbation sense concerns a different
asymptotic regime and is taken up as an extension in 
\Cref{sec:conclusion}.
\end{remark}

\subsection{Collocation data}
\label{sec:collocation}

In the PINN formulation the data $f$ and $g$ are accessed only at finite
collocation sets,
\begin{equation*}
    X = \{\bx_{1}, \ldots, \bx_{\mtil}\} \subset \dom,
    \qquad
    Z = \{\bz_{1}, \ldots, \bz_{m}\} \subset \bdry,
\end{equation*}
with $m = \lfloor \sqrt{\mtil} \rfloor$, the $d = 2$ scaling
of~\cite{bonito2025}. The interior set $X$ is drawn by rejection: candidate
points are taken uniformly over the bounding box of $\dom$ and kept when
$\bphi(\bx) < 0$. The boundary set $Z$ is laid down at equal arc-length
spacing along $\bdry$ under a parametrisation $\chi : [0, L] \to \bdry$.

\subsection{Discrete boundary norm}
\label{sec:boundary-disc}
Write $r_{j} := (g - v)(\bz_{j})$ for the boundary residual sampled at the
collocation site $\bz_{j} \in Z$. From these $m$ values we form a discrete
counterpart of the $\HhalfBdry$ trace norm in two pieces, an $\Lp{2}$ part
and a Gagliardo seminorm part:
\begin{align}
    \norm{g - v}^{\ast 2}_{\Lp{2}(\bdry)}
        &= \frac{L}{m} \sum_{j=1}^{m} r_{j}^{2},
    \\
    \snorm{g - v}^{\ast 2}_{\HhalfBdry}
        &= \frac{1}{m^{2}} \sum_{\substack{i,j=1 \\ i \neq j}}^{m}
              \frac{(r_{i} - r_{j})^{2}}{|\bz_{i} - \bz_{j}|^{2}},
\end{align}
their sum
$\norm{g - v}^{\ast 2}_{\HhalfBdry} = \norm{g - v}^{\ast 2}_{\Lp{2}(\bdry)}
+ \snorm{g - v}^{\ast 2}_{\HhalfBdry}$
being the full discrete trace norm. When the $\bz_{j}$ are equispaced in
arc length along a $C^{2}$ Jordan curve, this functional stays norm-equivalent
to the continuous $\HhalfBdry$ norm over the relevant Besov model classes;
Theorem~\ref{thm:boundary-norm-equiv} states the equivalence we rely on.

\section{The Consistent CutPINN Loss Hierarchy}
\label{sec:method}

We compare three loss functionals on a common neural network ansatz
$v = v_{\theta}$. All three share the discrete pointwise PDE residual
\begin{equation}
\label{eq:residual}
    \Rop(v; \bx) := -\eps \Delta v(\bx) + \bb(\bx) \cdot \nabla v(\bx)
                  + c(\bx) v(\bx) - f(\bx).
\end{equation}

\subsection{Three loss functionals}

The three losses below differ only in the norms used to penalise the
interior and boundary residuals, and form a hierarchy: each replaces one
$\Lp{2}$ penalty of its predecessor by the norm in which the corresponding
residual is actually controlled.

\begin{definition}[Loss hierarchy]
\label{def:losses}
For a test function $v \in C^{2}(\bar{\dom})$, define
\begin{align}
\Lpinn(v) &= \frac{1}{\mtil} \sum_{i=1}^{\mtil} \Rop(v; \bx_{i})^{2}
           + \frac{1}{m} \sum_{j=1}^{m} (g - v)(\bz_{j})^{2},
\label{eq:Lpinn} \\
\Lcons(v) &= \frac{1}{\mtil} \sum_{i=1}^{\mtil} \Rop(v; \bx_{i})^{2}
           + \norm{g - v}^{\ast 2}_{\HhalfBdry},
\label{eq:Lcons} \\
\Lgam(v) &= \Biggl( \frac{1}{\mtil} \sum_{i=1}^{\mtil}
                 \snorm{\Rop(v; \bx_{i})}^{\gamma} \Biggr)^{2/\gamma}
           + \norm{g - v}^{\ast 2}_{\HhalfBdry},
\label{eq:Lgamma}
\end{align}
where $\gamma := 1 + 1/\log\mtil$.
\end{definition}

The three losses correspond to three points on a methodological hierarchy.
$\Lpinn$ is the standard PINN loss adapted to~\eqref{eq:cdr}: $\Lp{2}$
interior, $\Lp{2}$ boundary. $\Lcons$ replaces the boundary term by the
discrete $\HhalfBdry$ norm of \Cref{sec:boundary-disc}, applied to the
convection-diffusion residual. $\Lgam$ further replaces the interior term
by a discrete $\Lp{\gamma}$ norm with $\gamma$ a logarithmic perturbation of
$\Lp{1}$ governed by the interior budget $\mtil$.

\begin{remark}[Why $\gamma = 1 + 1/\log\mtil$]
\label{rem:gamma-choice}
The choice of $\gamma$ is dictated by the optimal recovery theory of
\cite{bonito2025}, where it arises as the boundary between two regimes in
the recovery exponent on Besov model classes. Concretely, the $\Lp{\gamma}$
norm interpolates between $\Lp{1}$ and $\Lp{2}$ in such a way that the
discrete $\Lp{\gamma}$ functional remains equivalent to the continuous
$\Lp{\gamma}$ norm on the model class
$U(B^{s}_{\gamma}(\Lp{\gamma}(\dom)))$, with $s$ matching the smoothness of
the source~$f$, while the $\Lp{\gamma}$ norm in turn controls the
$\Hminusone$ residual through the Sobolev embedding used in the analysis
below. In our numerical setting $\gamma$ is only slightly above $1$:
at $\mtil = 1600$ we have $\gamma = 1 + 1/\log(1600) \approx 1.136$. The
practical effect is to dampen the layer overweighting of $\Lp{2}$ without
collapsing entirely to $\Lp{1}$, which would be too insensitive to the
bulk.
\end{remark}

\begin{remark}[Why $\Lp{2}$ fails on convection-dominated problems]
\label{rem:l2-fails}
The pointwise residual at a layer point $\bx_{\mathrm{layer}}$ scales as
$\Rop(v) \sim \eps^{-1}$. The $\Lp{2}$ functional therefore amplifies a
single layer residual by $\eps^{-2}$ relative to a bulk residual of order
$1$. With $\mtil$ uniform interior points and an $O(\eps)$-wide layer, the
expected number of layer points in $X$ is $O(\mtil \eps)$, and the
$\Lp{2}$ loss is dominated by the contribution
$\eps^{-2} \cdot \mtil \eps / \mtil = \eps^{-1}$ from these few points.
The optimisation therefore fits the layer at the expense of the smooth
bulk. The $\Lp{\gamma}$ functional with $\gamma$ slightly above $1$
amplifies the layer residual by $\eps^{-\gamma}$, a factor of $\eps^{2 -
\gamma}$ smaller; the bulk regains its appropriate weight in the loss.
\end{remark}

\subsection{Network architecture and training}
\label{sec:network}

We use a fully-connected MLP $v_{\theta} : \R^{2} \to \R$ with five hidden
layers of width $50$, $\tanh$ activations, and Xavier initialisation,
identical to the architecture used in~\cite{singh2026cutpinn}. No Fourier
feature embedding~\cite{tancik2020fourier} and no layer-aware ansatz are used in
the proposed framework.

Training proceeds in two phases. First, the AdamW optimiser is run for
$2{,}000$ iterations with learning rate $8 \times 10^{-5}$, gradient
clipping at norm $0.8$, and ReduceLROnPlateau scheduling. Second, L-BFGS is
run for up to $2{,}000$ iterations as a final polish. We observe that pure
L-BFGS, the training schedule used in~\cite{singh2026cutpinn}, fails on an appreciable fraction
of seeds at $s \geq 6$. The AdamW
phase first navigates the flat directions
of the $\Lp{\gamma}/\HhalfBdry$ loss landscape, after which L-BFGS
converges quickly.

\begin{algorithm}[t]
\caption{Consistent CutPINN training for \eqref{eq:cdr}}
\label{alg:train}
\begin{algorithmic}[1]
\Require level-set $\bphi$, data $f, g$, budgets $\mtil, m$, interior
exponent $\gamma = 1 + 1/\log\mtil$
\State sample interior points $X$ by rejection on the bounding box of
$\dom$ ($\bphi < 0$); place $m$ arc-length-equispaced boundary points $Z$
\State initialise network $v_{\theta}$ (5 layers, width 50, $\tanh$, Xavier)
\State assemble the loss $\Lgam(v_{\theta})$ of Definition~\ref{def:losses}: discrete
$\Lp{\gamma}$ interior residual and discrete $\HhalfBdry$ boundary term
\For{$2000$ iterations} \Comment{phase 1: exploration}
    \State AdamW step on $\Lgam$ (lr $8\times10^{-5}$, grad-clip $0.8$,
    ReduceLROnPlateau)
\EndFor
\For{up to $2000$ iterations} \Comment{phase 2: refinement}
    \State L-BFGS step on $\Lgam$
\EndFor
\State \Return trained network $v_{\theta}$
\end{algorithmic}
\end{algorithm}

The pipeline is summarised schematically in \Cref{fig:pipeline}.

\begin{figure}[!htb]
\centering
\definecolor{cdom}{RGB}{70,110,160}
\definecolor{closs}{RGB}{40,140,120}
\definecolor{copt}{RGB}{175,110,60}
\resizebox{0.79\textwidth}{!}{%
\begin{tikzpicture}[
    >={Stealth[length=2.4mm]},
    font=\small,
    card/.style={rounded corners=3pt, draw=#1, line width=0.9pt, fill=#1!6,
                 inner sep=5pt, align=center, text width=32mm, minimum height=9mm},
    sumnode/.style={circle, draw=closs, line width=1pt, fill=closs!10,
                    inner sep=2pt, minimum size=10mm},
    lbl/.style={font=\footnotesize\itshape, text=#1},
    flow/.style={->, line width=1pt, draw=black!40, shorten >=1pt, shorten <=1pt},
    gen/.style={->, line width=1pt, draw=cdom!55, shorten >=1pt, shorten <=1pt},
]

\node[card=cdom] (dom) {Level-set domain\\[1pt] $\dom=\{\bphi<0\}$};
\node[card=cdom, below left=8mm and 1mm of dom] (X) {Interior points $X=\{\bx_i\}_{i=1}^{\mtil}$};
\node[card=cdom, below right=8mm and 1mm of dom] (Z) {Boundary points $Z=\{\bz_j\}_{j=1}^{m}$};

\node[card=closs, below=9mm of X] (int)
     {Interior $\Lp{\gamma}$:\ $\bigl(\tfrac1\mtil\!\sum_i|\Rop(\bx_i)|^{\gamma}\bigr)^{2/\gamma}$};
\node[card=closs, below=9mm of Z] (bdy)
     {Boundary: $\norm{g-v}^{\ast2}_{\HhalfBdry}$};
\node[sumnode] (sum) at ($(int)!0.5!(bdy) + (0,-10mm)$) {$\Lgam$};

\node[card=copt, below=9mm of sum] (adam)
     {Phase 1 \ AdamW \scriptsize(explore)};
\node[card=copt, below=5mm of adam] (lbfgs)
     {Phase 2 \ L\textendash BFGS \scriptsize(refine)};
\node[card=cout, below=8mm of lbfgs] (out) {Trained $v_\theta$};

\draw[gen] (dom) to[out=-150,in=90] (X);
\draw[gen] (dom) to[out=-30,in=90] (Z);
\draw[flow] (X) -- (int);
\draw[flow] (Z) -- (bdy);
\draw[flow] (int) to[out=-90,in=150] (sum);
\draw[flow] (bdy) to[out=-90,in=30] (sum);
\draw[flow] (sum) -- (adam);
\draw[flow] (adam) -- (lbfgs);
\draw[flow] (lbfgs) -- (out);

\node[lbl=cdom!75, left=3mm of dom] {setup};
\node[lbl=closs!80, left=3mm of int] {loss};
\node[lbl=copt!85, left=3mm of adam] {training};

\end{tikzpicture}
}
\caption{The consistent CutPINN pipeline of Algorithm~\ref{alg:train}. The
level-set geometry fixes the interior and boundary collocation sets; the
loss $\Lgam$ pairs a discrete $\Lp{\gamma}$ interior residual with a
discrete $\HhalfBdry$ boundary term; and training runs an AdamW exploration
phase followed by an L-BFGS refinement phase. The interior exponent
$\gamma = 1 + 1/\log\mtil$ is the only departure from a standard PINN loss.}
\label{fig:pipeline}
\end{figure}

\section{Error Analysis}
\label{sec:analysis}

The goal of this section is the a priori $\Hone$ bound of
Theorem~\ref{thm:apriori-lgamma}, controlling the error of any trial network
by its computable loss together with discretisation terms that vanish as the
collocation budgets grow. The argument proceeds in four steps, and the
section is organised around them. Section~\ref{sec:cont-stability} records
the continuous stability estimate that reduces the $\Hone$ error to an
$\Hminusone$ interior residual and an $\HhalfBdry$ boundary residual.
Section~\ref{sec:boundary-equiv} recalls the discrete trace-norm equivalence
that handles the boundary residual, which we take from
\cite{singh2026cutpinn} as a black box. Section~\ref{sec:interior-comp}
proves the new ingredient, a discrete $\Lp{\gamma}$ comparison on the cut
domain, where the curved geometry enters through a cut-cell floor on the
recovery rate. Section~\ref{sec:apriori} assembles these into the main
bound.

\subsection{Continuous stability}
\label{sec:cont-stability}

We first isolate the stability of the continuous problem, which converts the
$\Hone$ error into residuals the discrete losses can measure.
The continuous problem~\eqref{eq:cdr} admits an a priori bound in the
$\Hone$ norm of the form
\begin{equation}
\label{eq:continuous-stability}
    \norm{u - v}_{\Hone(\dom)}
    \;\leq\; C_{\mathrm{stab}}(\eps^{-1}) \,
    \Bigl[ \norm{f - \Lop v}_{\Hminusone(\dom)}
         + \norm{g - v}_{\HhalfBdry} \Bigr]
\end{equation}
for every $v \in \Hone(\dom)$, where $\Lop v := -\eps \Delta v + \bb \cdot
\nabla v + c v$ and the stability constant $C_{\mathrm{stab}}$ depends on
$\eps^{-1}$, the bounded coefficients, and the coercivity parameter
$\kappa_{0}$ of \Cref{sec:cdr-pde}. The factor $\eps^{-1}$ arises because
coercivity controls the $\eps$-weighted energy norm, and recovering the full
$\Hone$ norm from it costs a factor $\eps^{-1/2}$ in each of two places: in
the duality pairing of the residual against the error, and in passing from
the energy norm of the error to its $\Hone$ norm. The resulting $\eps^{-1}$
is the standard convection-diffusion $\Hone$ stability constant;
see~\cite{roos2008robust}.

\subsection{Discrete boundary norm equivalence}
\label{sec:boundary-equiv}

The boundary residual in the stability estimate is measured in the
continuous $\HhalfBdry$ trace norm, whereas the loss only has access to the
discrete norm of Section~\ref{sec:boundary-disc}. The following theorem,
established in~\cite{singh2026cutpinn} for the level-set cut geometry, certifies
that the two are equivalent up to a term controlled by the boundary budget;
we use it without reproving it.

\begin{theorem}[Discrete $\HhalfBdry$ norm equivalence,
\protect{\cite[Theorem 3]{singh2026cutpinn}}]
\label{thm:boundary-norm-equiv}
Let $\dom \subset \R^{2}$ be a bounded domain whose boundary $\bdry$ is a
$C^{2}$ simple closed curve of length $L$ and maximum curvature
$\kappa_{\max}$, and let $Z = \{\bz_{1}, \ldots, \bz_{m}\} \subset \bdry$
be $m$ arc-length-equispaced points. For the trace model class
$G = \mathrm{Tr}\,U(B^{\bar{s}}_{\infty}(\Lp{2}(\dom)))$ with $\bar{s} > 1$,
\begin{equation}
\begin{array}{cc}
\norm{g}_{\HhalfBdry} \lesssim \norm{g}^{\ast}_{\HhalfBdry}
        + \norm{g}_{\mathrm{Tr}(B)} m^{-\bar{s}+1}  \\[6pt]
\norm{g}^{\ast}_{\HhalfBdry} \lesssim \norm{g}_{\HhalfBdry}
        + \norm{g}_{\mathrm{Tr}(B)} m^{-\bar{s}+1},
\end{array}
\end{equation}
with constants depending on $\kappa_{\max}$, $L$, $\bar{s}$, and the
polynomial degree $r > \max(\bar{s}, 1)$.
\end{theorem}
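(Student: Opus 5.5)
The plan is to reduce the curved-boundary statement to the flat (periodic) one-dimensional setting of \cite{bonito2025} by pulling everything back through the arc-length parametrisation $\chi:[0,L]\to\bdry$. Then $L$-periodic functions $h = g\circ\chi$ are sampled at the equispaced nodes $t_j = jL/m$. Two comparisons are needed: one between the continuous $\HhalfBdry$ norm and the periodic $\Hhalf$ norm on $[0,L)$, and one between the two discrete functionals, the chordal one $|\bz_i-\bz_j|$ and the parametric one $d(t_i,t_j)$ (periodic distance). Both reduce to a chord-arc estimate for a $C^{2}$ Jordan curve:
\begin{equation*}
c_{\mathrm{ca}}\, d(s,t) \le |\chi(s)-\chi(t)| \le d(s,t).
\end{equation*}
Locally, for $d(s,t)\lesssim \kappa_{\max}^{-1}$, the lower bound follows from $|\chi(s)-\chi(t)| \ge d(s,t)\bigl(1-\kappa_{\max}d(s,t)/2\bigr)$, by Taylor expansion with $|\chi''|=\kappa\le\kappa_{\max}$. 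Away from the diagonal it follows from compactness and injectivity of $\chi$. Hence the Gagliardo kernels $|\bz_i-\bz_j|^{-2}$ and $d(t_i,t_j)^{-2}$ are comparable with constants depending only on $\kappa_{\max}$, $L$ and $c_{\mathrm{ca}}$. The same holds for the continuous double integrals, because arc-length measure is exactly $dt$. The $\Lp{2}$ parts transfer identically.

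With this reduction, both inequalities become statements about $h$ and its periodic samples. The next step is to check that the trace class pulls back to a one-dimensional Besov class: $\norm{h}_{B^{\bar s-1/2}_\infty(\Lp{2})} \lesssim \norm{g}_{\mathrm{Tr}(B)}$. This holds for $\bar s\le 2$ directly, since $\chi\in C^2$; for larger $\bar s$ I would define $\mathrm{Tr}(B)$ intrinsically on $\bdry$ so that the bound is immediate.

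Then I follow the flat argument. Let $P_m h$ be the periodic spline of degree $r$ on the grid $\{t_j\}$ that interpolates (or quasi-interpolates) the data. Split $h = P_m h + (h-P_m h)$. For the discrete-to-continuous direction, bound $\norm{h-P_m h}_{\Hhalf} \lesssim m^{-(\bar s-1/2)+1/2}\norm{h}_B = m^{-\bar s+1}\norm{h}_B$ using Jackson and Bernstein estimates for splines. In the other direction, show that for splines of fixed degree on a uniform grid the discrete Gagliardo sum of the nodal values is equivalent to the continuous $\Hhalf$ seminorm. This is a finite-dimensional, translation-invariant comparison, which can be verified through the Fourier (circulant) symbol or via an inverse inequality on each cell. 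The result then follows from the triangle inequality, using that the discrete norms of $h$ and $P_m h$ coincide (interpolation) or differ by $m^{-\bar s+1}\norm{h}_B$ (quasi-interpolation, bounded through the discrete $\ell^2$ sum of sampling errors weighted by the harmonic kernel). The reverse inequality is proved symmetrically.

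The main obstacle is the near-diagonal part of the discrete Gagliardo sum. There the pairs with $|i-j|$ small carry weights of order $m^{2}/|i-j|^{2}$, and one must show that sampling errors of size $m^{-(\bar s-1/2)}$ in $\ell^\infty$-like averages do not accumulate beyond $m^{-\bar s+1}$. This is where $\bar s>1$ is used: I would control $\sum_{i\ne j}(e_i-e_j)^2/(m^2 d_{ij}^2)$ by $\sum_k k^{-2}\sum_i (e_{i+k}-e_i)^2$, bound the differences by the local modulus of smoothness of $h$ at scale $k/m$, and sum the resulting series. The curvature constant enters only through the chord-arc reduction, so the constant's dependence on $\kappa_{\max}$, $L$, $\bar s$, $r$ comes out exactly as in the statement.
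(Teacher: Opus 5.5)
Your proposal follows the route the paper relies on. The paper does not prove this theorem itself but imports it from \cite[Theorem~3]{singh2026cutpinn}, whose core is a chord-arc lemma that transfers the chordal Gagliardo kernels to the arc-length periodic setting, followed by the flat piecewise-polynomial optimal-recovery argument of \cite{bonito2025}; your outline of both halves is sound.

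Two points in your bookkeeping need correcting. First, the constants you obtain necessarily depend on $c_{\mathrm{ca}}$, which is not controlled by $\kappa_{\max}$ and $L$, since a curve of bounded length and curvature can nearly touch itself. So they do not come out ``exactly as in the statement'', although the paper's introduction does acknowledge the chord-arc parameter. Second, your intrinsic redefinition of $\mathrm{Tr}(B)$ for larger $\bar s$ is a real change of hypothesis, not a convenience. On a merely $C^{2}$ curve the pull-backs $w\circ\chi$ of smooth $w$ are in general no smoother than $\chi$, so with the literal trace class the rate $m^{-\bar s+1}$ can only be expected for $\bar s\le 5/2$.
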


We invoke Theorem~\ref{thm:boundary-norm-equiv} as a black box; it supplies the
boundary half of the a priori bound below. The equispacing hypothesis is
moreover not essential: for boundary points drawn \emph{i.i.d.}\ uniformly
with respect to arc length, the discrete $\HhalfBdry$ seminorm concentrates
around its continuous counterpart at the Monte Carlo rate
$(m\delta)^{-1/2}$, with constants again controlled by the chord-arc
parameter~\cite[Proposition~4]{singh2026cutpinn}. This is the boundary
counterpart of Proposition~\ref{prop:lgamma-prob} below, so neither half of
the analysis is tied to a particular collocation layout.

\subsection{Discrete interior $\Lp{\gamma}$ comparison}
\label{sec:interior-comp}

We now turn to the interior residual, which is the part of the analysis
specific to the present work. The stability estimate controls it in
$\Hminusone$, and through the embedding $\Lp{\gamma}(\dom) \hookrightarrow
\Hminusone(\dom)$ it suffices to bound the continuous $\Lp{\gamma}$ norm of
the residual; the loss, however, only evaluates the residual at the interior
collocation points. The following proposition closes this gap on a cut
domain, showing that the discrete $\Lp{\gamma}$ functional approximates the
continuous $\Lp{\gamma}$ norm up to a rate $\mtil^{-\alpha_{\gamma}}$ in
which the curved boundary enters through a cut-cell floor.

\begin{proposition}[\textbf{Discrete $\Lp{\gamma}$-comparison on a cut domain}]
\label{prop:lgamma-cut}
Let $\dom$ be a bounded $C^{2}$ level-set domain, let $Q \supset \dom$ be an
axis-aligned bounding box, and let $X = G_{k,r} \cap \dom$ be the interior
points of the tensor-product grid $G_{k,r}$ of $Q$, with $\mtil := |X|$ and
$r > \max(s,1)$. Let $\gamma \in (1, 2]$. For every
$w \in B^{s}_{\gamma}(\Lp{\gamma}(\dom))$ with $s > 2/\gamma$,
\begin{equation}
\label{eq:lgamma-comparison}
    \bigl| \, \norm{w}_{\Lp{\gamma}(\dom)} - \norm{w}^{\ast}_{\Lp{\gamma}(\dom)}
    \, \bigr|
    \;\leq\;
    C_{\mathrm{int}} \, \norm{w}_{B^{s}_{\gamma}(\dom)} \, \mtil^{-\alpha_{\gamma}},
    \qquad
    \alpha_{\gamma} = \min\!\Bigl( \tfrac{s}{2} - \bigl(\tfrac{1}{\gamma} - \tfrac{1}{2}\bigr)_{+}, \ \tfrac{1}{2\gamma} \Bigr),
\end{equation}
where $\norm{w}^{\ast \gamma}_{\Lp{\gamma}(\dom)} :=
|\dom| \mtil^{-1} \sum_{i=1}^{\mtil} |w(\bx_{i})|^{\gamma}$ and $C_{\mathrm{int}}$
depends on $\dom$, $s$, $\gamma$, $r$, and the Stein extension constant.
\end{proposition}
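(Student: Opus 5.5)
We extend $w$ to the bounding box and reduce the problem to a per-cell comparison, treating the cut cells separately. First, by the Stein extension operator for the Lipschitz (indeed $C^{2}$) domain $\dom$, we extend $w$ to $Ew \in B^{s}_{\gamma}(\Lp{\gamma}(Q))$ with $\norm{Ew}_{B^{s}_{\gamma}(Q)} \lesssim \norm{w}_{B^{s}_{\gamma}(\dom)}$. Since $s > 2/\gamma$, $Ew$ is continuous, so point values are meaningful. Partition $Q$ into the $k^{2}$ dyadic cells $I$ of side $h \asymp \mtil^{-1/2}$ underlying $G_{k,r}$. These cells split into interior cells $I \subset \dom$ and cut cells $I \cap \bdry \neq \emptyset$. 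Because $\bdry$ is a $C^{2}$ curve of length $L$, the cut cells number $O(L/h) = O(\mtil^{1/2})$ and have total area $O(h) = O(\mtil^{-1/2})$.

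By the reverse triangle inequality for the (discrete and continuous) $\Lp{\gamma}$ norms, the difference $\bigl|\norm{w}_{\Lp{\gamma}} - \norm{w}^{\ast}_{\Lp{\gamma}}\bigr|$ is controlled by two terms. The first is a grid-quadrature error on the union of interior cells. The second is the mismatch on cut cells, together with the normalisation error $\bigl| |\dom| \mtil^{-1} - h^{2} \bigr|$, which is itself $O(h)$ relative by the same area count.

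On interior cells we follow \cite{bonito2025}. On each $I$ we take the local polynomial $P_{I}$ of degree $r$ that is a near-best $\Lp{\gamma}$ approximation. The piecewise polynomial $P = \sum_{I} P_{I}\chi_{I}$ has continuous and discrete $\Lp{\gamma}$ norms that agree up to constants depending only on $r$, by finite-dimensional norm equivalence on the reference cell, and they agree exactly if the grid $G_{k,r}$ carries a quadrature-type equivalence. The triangle inequality then reduces the interior contribution to $\norm{w - P}_{\Lp{\gamma}}$ plus its discrete analogue $\bigl(h^{2}\sum_{i} |(w-P)(\bx_{i})|^{\gamma}\bigr)^{1/\gamma}$. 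The discrete term is bounded by $h^{2/\gamma}$ times the $\ell^{\gamma}$ sum of the cellwise $\Lp{\infty}$ errors. The local Whitney estimate with embedding, $\norm{w - P_{I}}_{\Lp{\infty}(I)} \lesssim h^{s - 2/\gamma}\snorm{w}_{B^{s}_{\gamma}(I)}$, combined with the $\ell^{\gamma}$-superadditivity of Besov seminorms over cells, gives $h^{s - (2/\gamma - 1)_{+}\cdot(\ldots)}$. Tracking exponents as in \cite{bonito2025} yields $\mtil^{-(s/2 - (1/\gamma - 1/2)_{+})}$. This is the first entry of $\alpha_{\gamma}$.

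On cut cells no exactness is available, so we bound each side crudely. The continuous part $\norm{w}_{\Lp{\gamma}(\dom \cap \mathcal{C})}$, with $\mathcal{C}$ the union of cut cells, is bounded via the embedding $B^{s}_{\gamma} \hookrightarrow \Lp{\infty}$, valid since $s > 2/\gamma$, by $\norm{w}_{\Lp{\infty}} |\mathcal{C}|^{1/\gamma} \lesssim \norm{w}_{B^{s}_{\gamma}} h^{1/\gamma}$. The discrete part is likewise at most $\norm{w}_{\Lp{\infty}} (h^{2}\cdot \#(X\cap\mathcal{C}))^{1/\gamma} \lesssim \norm{w}_{B^{s}_{\gamma}} h^{1/\gamma}$. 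Since $h^{1/\gamma} = \mtil^{-1/(2\gamma)}$, this gives the cut-cell floor $1/(2\gamma)$. Taking the minimum of the two rates completes the bound.

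The main obstacle I expect is the interior step. The claimed exponent requires the cellwise quadrature equivalence for $P$ on the nodes $G_{k,r}$ that fall in each cell, and the sum over cells must be carried out so that the $(1/\gamma - 1/2)_{+}$ loss appears, rather than the cruder $2/\gamma$ embedding loss. I would try to import this directly from the cube result of \cite{bonito2025}, applied to $Ew$ on $Q$ and restricted to the interior cells. The cut-cell estimate above should absorb everything that does not fit that argument.
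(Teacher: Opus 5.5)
Your cut-region argument is correct. Your norm-level split,
$\bigl|\norm{w}_{\Lp{\gamma}(\dom)}-\norm{w}^{\ast}_{\Lp{\gamma}(\dom)}\bigr| \le \bigl|\norm{w}_{\mathrm{int}}-\norm{w}^{\ast}_{\mathrm{int}}\bigr| + \max\bigl(\norm{w}_{\mathrm{cut}},\norm{w}^{\ast}_{\mathrm{cut}}\bigr)$,
is cleaner than the paper's route, which works with $\gamma$-th powers, takes roots by subadditivity, and gets the lower bound by reading the estimates ``in reverse''. Your split delivers the floor $\mtil^{-1/(2\gamma)}$ in both directions at once.

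The gap is the interior step. Your triangle inequality leaves three terms: $\norm{w-P}_{\Lp{\gamma}}$, its discrete analogue, and $\bigl|\norm{P}_{\Lp{\gamma}(\dom^{\mathrm{int}})}-\norm{P}^{\ast}_{\mathrm{int}}\bigr|$. You drop the third by finite-dimensional norm equivalence, but that only gives $c_{r}\norm{P}^{\ast}\le\norm{P}\le C_{r}\norm{P}^{\ast}$. So the third term is of the size of $\norm{w}$ itself and does not decay in $\mtil$. The exact agreement you hope for does not happen either: an equal-weight rule on equispaced nodes is not exact for $|P|^{\gamma}$, which is not even a polynomial unless $\gamma=2$. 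Importing the cube result of \cite{bonito2025} does not help if what you import is an equivalence with $r$-dependent constants. That yields only $\norm{w}\lesssim\norm{w}^{\ast}+\mtil^{-\alpha}$, not the difference bound with constant $1$ in front of $\norm{w}^{\ast}$ that the proposition asserts. The paper removes your discrete term by taking $P=S^{\ast}_{k}(\hat w)$, which interpolates $w$ at the nodes. It still needs the same ingredient, which it records as \eqref{eq:int-bound} with the factor $1+O(\mtil^{-1/2})$.

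What closes the step is a first-order quadrature estimate, not a norm equivalence. Set up as follows:
\begin{itemize}
\item let $h\simeq\mtil^{-1/2}$ be the node spacing and $V_{i}$ the node box of area $h^{2}$ around $\bx_{i}$;
\item call a node interior when $V_{i}\subset\dom$; the rest of $\dom$ is an $O(h)$ tube that your cut-cell bound already handles;
\item set $W:=\sum_{i}w(\bx_{i})\chi_{V_{i}}$ over interior nodes.
\end{itemize}
Then $\norm{W}_{\Lp{\gamma}}$ is exactly the interior discrete norm with weight $h^{2}$, which your normalisation step relates to $|\dom|\mtil^{-1}$. Hence the interior difference is at most $\norm{w-W}_{\Lp{\gamma}}$. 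For $\bx\in V_{i}$ and a local polynomial $p_{i}$,
\[
\begin{aligned}
|w(\bx)-w(\bx_{i})|
&\le 2\norm{w-p_{i}}_{\Lp{\infty}(V_{i})}+\sqrt{2}\,h\norm{\nabla p_{i}}_{\Lp{\infty}(V_{i})}\\
&\lesssim \norm{w-p_{i}}_{\Lp{\infty}(V_{i})}+h^{1-2/\gamma}\norm{\nabla w}_{\Lp{\gamma}(V_{i})}.
\end{aligned}
\]
The second line follows from an inverse estimate applied to $p_{i}$ minus the mean of $w$ on $V_{i}$, together with Poincar\'e. Now use your local Whitney-plus-embedding bound $\norm{w-p_{i}}_{\Lp{\infty}(V_{i})}\lesssim h^{s-2/\gamma}\snorm{w}_{B^{s}_{\gamma}(V_{i})}$ and $\ell^{\gamma}$-superadditivity. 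Summing gives $\norm{w-W}_{\Lp{\gamma}}\lesssim (h^{s}+h)\norm{w}_{B}$, where $\norm{\nabla w}_{\Lp{\gamma}}\lesssim\norm{w}_{B}$ because $s>2/\gamma\ge 1$. So the interior contributes $O(\mtil^{-1/2})$, which is below the floor.

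Two further points follow. For $\gamma\in(1,2]$ and $s>2/\gamma$ one has $\alpha=(s-2/\gamma+1)/2>1/2>1/(2\gamma)$, so $\alpha_{\gamma}=1/(2\gamma)$ under the hypotheses. The first entry of the minimum is therefore never active, and the $(1/\gamma-1/2)_{+}$ bookkeeping you worried about is unnecessary; your own count $h^{2/\gamma}\cdot h^{s-2/\gamma}=h^{s}$ already suffices. Applying the same quadrature estimate to $P$ is also what justifies \eqref{eq:int-bound}. It gives remainder $\mtil^{-1/2}$ in place of $\mtil^{-\gamma\alpha}$, which is harmless after the minimum in \eqref{eq:gamma-power-consolidated}.
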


\begin{proof}
The argument extends~\cite[Proposition~5]{singh2026cutpinn} from the
$\Lp{2}$ interior norm to $\Lp{\gamma}$, isolating the cut-cell contribution
as there. The exponent $\gamma$ enters in two places: the optimal-recovery
rate $\alpha$ of the box interpolant, and the $\gamma$-th-root passage from
the $\Lp{\gamma}$-power bound to the $\Lp{\gamma}$ norm, which sets the cut
floor.

\smallskip\noindent\textbf{Setup.} Let $\hat{w} = Ew$ be the Stein
extension, with $\norm{\hat w}_{B^{s}_{\gamma}(\R^{2})} \lesssim
\norm{w}_{B^{s}_{\gamma}(\dom)}$ and $\norm{\hat w}_{C(\bar Q)} \lesssim
\norm{w}_{B}$ via the embedding $B^{s}_{\gamma} \hookrightarrow C$, valid
since $s > 2/\gamma$. Let $P := S^{\ast}_{k}(\hat w) \in V^{r}(\mathcal{T}_{k})$
be the piecewise-polynomial interpolant of~\cite[Theorem~2.1]{bonito2025} on
the $N = (r \cdot 2^{k})^{2}$ nodes of $G_{k,r}$; their hypotheses
$0 < p \leq \infty$, $s > d/p$ hold here at $p = \gamma$, $d = 2$, so by the
$\Lp{\gamma}$ optimal-recovery estimate (case $p = \gamma$),
\begin{equation}
\label{eq:box-interp}
    \norm{\hat w - P}_{\Lp{\gamma}(Q)} \lesssim \norm{w}_{B}\, N^{-\alpha},
    \qquad
    \alpha = \tfrac{s}{2} - \bigl(\tfrac{1}{\gamma} - \tfrac{1}{2}\bigr)_{+}
           = \tfrac{s - 2/\gamma + 1}{2},
\end{equation}
the last equality because $1/\gamma \geq 1/2$ for $\gamma \leq 2$, and $\norm{P}_{C(\bar Q)} \lesssim
\norm{w}_{B}$ by the stability of $S^{\ast}_{k}$ (Lebesgue constant
$\Lambda_{r}$). The node and interior-point counts are comparable,
$N = |Q|\,h^{-2} r^{2}$ and $\mtil = |X| = |\dom|\,h^{-2}(1 + O(\mtil^{-1/2}))$
with $h$ the grid spacing, so $N \simeq \mtil$ with constant
$|Q|\,r^{2}/|\dom|$; we write $N^{-\alpha} \simeq \mtil^{-\alpha}$ and absorb
the constant into $C_{\mathrm{int}}$.

\smallskip\noindent\textbf{Cut-cell isolation.} Partition the cells meeting
$\bar\dom$ into interior cells (contained in $\dom$), with union
$\dom^{\mathrm{int}}$, and cut cells (meeting $\bdry$). Since $\bdry \in C^{2}$,
\begin{equation}
\label{eq:cut-area}
    |\dom \setminus \dom^{\mathrm{int}}| = O(\mtil^{-1/2}),
    \qquad
    \# \{\text{cut nodes}\} = O(\mtil^{1/2}).
\end{equation}
Both follow from the grid geometry: $h \simeq |\dom|^{1/2}\mtil^{-1/2}$, and a
$C^{2}$ boundary of length $L$ and bounded curvature $\kappa_{\max}$ meets
$O(L/h) = O(\mtil^{1/2})$ cells, so the cut region, a tube of width $O(h)$
about $\bdry$, has area $O(L\,h) = O(\mtil^{-1/2})$. On the interior cells the
discrete sum runs over $X^{\mathrm{int}} := X \cap \dom^{\mathrm{int}}$;
writing $\norm{w}^{\ast\gamma}_{\Lp{\gamma}(\dom^{\mathrm{int}})} :=
|\dom|\mtil^{-1}\sum_{\bx_i \in X^{\mathrm{int}}}|w(\bx_i)|^{\gamma}$ for its
contribution, the BDPS norm-equivalence in $\Lp{\gamma}$ gives, with
$P(\bx_{i}) = w(\bx_{i})$ on $X^{\mathrm{int}}$,
\begin{equation}
\label{eq:int-bound}
    \norm{P}^{\gamma}_{\Lp{\gamma}(\dom^{\mathrm{int}})}
    \leq \norm{w}^{\ast \gamma}_{\Lp{\gamma}(\dom^{\mathrm{int}})}
        \bigl(1 + O(\mtil^{-1/2})\bigr)
        + C \norm{w}^{\gamma}_{B}\, \mtil^{-\gamma\alpha}.
\end{equation}
On the cut region, H\"older with $\norm{P}_{C(\bar Q)} \lesssim \norm{w}_{B}$
and~\eqref{eq:cut-area} gives
\begin{equation}
\label{eq:cut-bound}
    \norm{P}^{\gamma}_{\Lp{\gamma}(\dom \setminus \dom^{\mathrm{int}})}
    \leq \norm{P}^{\gamma}_{C(\bar Q)}\,
         |\dom \setminus \dom^{\mathrm{int}}|
    \lesssim \norm{w}^{\gamma}_{B}\, \mtil^{-1/2}.
\end{equation}
Adding~\eqref{eq:int-bound} and~\eqref{eq:cut-bound}, and noting
$\norm{w}^{\ast\gamma}_{\Lp{\gamma}(\dom^{\mathrm{int}})} \leq
\norm{w}^{\ast\gamma}_{\Lp{\gamma}(\dom)}$, gives the $\Lp{\gamma}$-power
bound over all of $\dom$,
\begin{equation}
\label{eq:gamma-power-consolidated}
    \norm{P}^{\gamma}_{\Lp{\gamma}(\dom)}
    \leq \norm{w}^{\ast \gamma}_{\Lp{\gamma}(\dom)}
        \bigl(1 + O(\mtil^{-1/2})\bigr)
        + C \norm{w}^{\gamma}_{B}\, \mtil^{-\min(\gamma\alpha,\, 1/2)},
\end{equation}
the analogue of~\cite[Eq.~(39)]{singh2026cutpinn} with the power $2$ replaced
by $\gamma$. The area factor $\mtil^{-1/2}$ in the cut term
of~\eqref{eq:cut-bound} is purely geometric: it is the measure of the cut
tube in~\eqref{eq:cut-area}, independent of $\gamma$. What depends on
$\gamma$ is the conversion of~\eqref{eq:gamma-power-consolidated} into a
bound on the norm, and this sets the floor.

For $\gamma \geq 1$ the map $t \mapsto t^{1/\gamma}$ is concave on
$[0,\infty)$, vanishes at the origin, and is therefore subadditive:
$(a+b)^{1/\gamma} \leq a^{1/\gamma} + b^{1/\gamma}$ for $a, b \geq 0$.
Applying this to~\eqref{eq:gamma-power-consolidated} and using
$(1 + O(\mtil^{-1/2}))^{1/\gamma} = 1 + O(\mtil^{-1/2})$ for $\gamma \geq 1$,
\begin{equation}
\label{eq:cut-root}
    \norm{P}_{\Lp{\gamma}(\dom)}
    \leq \norm{w}^{\ast}_{\Lp{\gamma}(\dom)}\bigl(1 + O(\mtil^{-1/2})\bigr)
       + C \norm{w}_{B}\, \mtil^{-\min(\alpha,\, 1/(2\gamma))},
\end{equation}
since $(\mtil^{-1/2})^{1/\gamma} = \mtil^{-1/(2\gamma)}$ and
$(\mtil^{-\gamma\alpha})^{1/\gamma} = \mtil^{-\alpha}$, the minimum being
preserved by the monotone root.

Two consequences are worth isolating. At $\gamma = 2$ the floor is
$1/(2\gamma) = 1/4$, the cut-cell ceiling $\min(\alpha, 1/4)$ of
\cite[Proposition~5]{singh2026cutpinn}: the $\Lp{2}$ case is the present one
with the $\gamma$-th root specialised to a square root. For
$\gamma < 2$ one has $1/(2\gamma) > 1/4$, so the cut region
degrades the recovery rate \emph{less} in $\Lp{\gamma}$ than in $\Lp{2}$; the
same choice of $\gamma$ near $1$ that down-weights the boundary layer also
relaxes the geometric penalty. As $\gamma \to 1^{+}$ the floor tends to
$1/2$, the cut-free Monte Carlo rate of~\cite[Proposition~6]{singh2026cutpinn};
the interpolation rate $\alpha$ governs once $\alpha \leq 1/(2\gamma)$, i.e.
$s \leq 3/\gamma - 1$, and the floor governs above it.

The floor $1/(2\gamma)$ is sharp. Let $w$ be a unit-height $C^{\infty}$ bump
supported in $\dom \setminus \dom^{\mathrm{int}}$ away from every
interior node, a region of area $\simeq \mtil^{-1/2}$
by~\eqref{eq:cut-area}, with $\norm{w}_{B} \simeq 1$. Then
$\norm{w}^{\gamma}_{\Lp{\gamma}(\dom)} \simeq \mtil^{-1/2}$ while
$\norm{w}^{\ast\gamma}_{\Lp{\gamma}(\dom)} = o(\mtil^{-1/2})$, whence
$\bigl|\norm{w}_{\Lp{\gamma}(\dom)} - \norm{w}^{\ast}_{\Lp{\gamma}(\dom)}\bigr|
\simeq \mtil^{-1/(2\gamma)}$, so no exponent exceeding $1/(2\gamma)$ holds
uniformly over the model class. A bump on a single cut cell, of area
$\simeq \mtil^{-1}$, gives only $\mtil^{-1/\gamma}$ and does not realise the
floor.

\bigskip
\noindent
\textbf{Assembly.} The uniform bound $\norm{w}^{\ast}_{\Lp{\gamma}(\dom)}
\leq |\dom|^{1/\gamma}\norm{w}_{C(\bar\dom)} \lesssim \norm{w}_{B}$ absorbs
the ratio correction in~\eqref{eq:cut-root}: the factor
$(1 + O(\mtil^{-1/2}))$ contributes $\lesssim \norm{w}_{B}\,\mtil^{-1/2}$,
dominated by the existing error term since $1/(2\gamma) \leq 1/2$. Thus
\begin{equation}
\label{eq:P-norm-clean}
    \bigl| \, \norm{P}_{\Lp{\gamma}(\dom)} - \norm{w}^{\ast}_{\Lp{\gamma}(\dom)}
    \, \bigr|
    \leq C\,\norm{w}_{B}\,\mtil^{-\alpha_{\gamma}},
    \qquad \alpha_{\gamma} := \min\bigl(\alpha,\, 1/(2\gamma)\bigr),
\end{equation}
the lower bound on $\norm{P}_{\Lp{\gamma}(\dom)}$ coming from the same
estimates~\eqref{eq:int-bound}--\eqref{eq:cut-bound} read in reverse over
$X = X^{\mathrm{int}} \cup (X\setminus X^{\mathrm{int}})$, where the cut sum
carries $O(\mtil^{1/2})$ nodes of weight $|\dom|/\mtil$ and so contributes
the same $\mtil^{-1/2}$ in the $\gamma$-power. Finally, the interpolant
approximates $w$ itself at the faster rate: by~\eqref{eq:box-interp} and
$w = \hat w$ on $\dom \subset Q$,
\begin{equation}
\label{eq:wP-norm-clean}
    \norm{w - P}_{\Lp{\gamma}(\dom)} \leq \norm{\hat w - P}_{\Lp{\gamma}(Q)}
    \lesssim \norm{w}_{B}\,\mtil^{-\alpha}
    \leq \norm{w}_{B}\,\mtil^{-\alpha_{\gamma}}
\end{equation}
since $\alpha \geq \alpha_{\gamma}$. Next, we have
\[
\bigl|\,\norm{w}_{\Lp{\gamma}(\dom)} - \norm{w}^{\ast}_{\Lp{\gamma}(\dom)}\,\bigr|
\leq \norm{w - P}_{\Lp{\gamma}(\dom)}
+ \bigl|\,\norm{P}_{\Lp{\gamma}(\dom)} - \norm{w}^{\ast}_{\Lp{\gamma}(\dom)}\,\bigr|.
\]
Combining \eqref{eq:wP-norm-clean} with~\eqref{eq:P-norm-clean}, we
obtain~\eqref{eq:lgamma-comparison}, which completes the proof.
\end{proof}

\begin{remark}[Inherited rate, new floor, and the sampled interior set]
Of the two exponents in $\alpha_{\gamma} = \min(\alpha, 1/(2\gamma))$, the
interpolation rate $\alpha = (s - 2/\gamma + 1)/2$ is the $\Lp{\gamma}$
instance ($p = \gamma$) of the optimal-recovery estimate
\cite[Theorem~2.1]{bonito2025}, used as a black box; the cut floor
$1/(2\gamma)$ has no flat-domain precedent, entering only through the
$\gamma$-th-root passage of the proof and relaxing toward $1/2$ as
$\gamma \to 1$, so the exponent improves precisely in the regime where
$\Lp{\gamma}$ is used to suppress the layer. At $\gamma = 2$ the floor is
$1/4$, recovering~\cite[Proposition~5]{singh2026cutpinn}. The proposition is
stated for the deterministic grid $X = G_{k,r} \cap \dom$, on which the
cut-cell partition is defined; the \emph{i.i.d.}\ rejection sampling used in
\Cref{sec:numerics} is covered by the companion Proposition~\ref{prop:lgamma-prob},
which gives the same comparison at the Monte Carlo rate $\mtil^{-1/2}$, no
slower than the floor and independent of the curvature, and reduces at
$\gamma = 2$ to the $\Lp{2}$ estimate~\cite[Proposition~6]{singh2026cutpinn};
like that result it is not invoked in Theorem~\ref{thm:apriori-lgamma}.
\end{remark}

\begin{proposition}[\textbf{Probabilistic $\Lp{\gamma}$ comparison under \emph{i.i.d.}\ uniform sampling}]
\label{prop:lgamma-prob}
Let $\dom \subset \R^{2}$ be a bounded domain and let
$X = \{\bx_{1}, \dots, \bx_{\mtil}\} \subset \dom$ be drawn \emph{i.i.d.}\ uniformly
on $\dom$. Set
$\norm{w}_{\Lp{\gamma}(\dom)}^{\ast\gamma} := |\dom|\,\mtil^{-1}
\sum_{i=1}^{\mtil} |w(\bx_{i})|^{\gamma}$ as in Proposition~\ref{prop:lgamma-cut}. For
every $w \in \Lp{\infty}(\dom)$ with $\norm{w}_{\Lp{\gamma}(\dom)} > 0$,
every $\gamma \geq 1$, and every $\delta \in (0,1)$, with probability at
least $1 - \delta$,
\begin{equation}
\label{eq:lgamma-prob}
    \bigl| \, \norm{w}_{\Lp{\gamma}(\dom)} - \norm{w}_{\Lp{\gamma}(\dom)}^{\ast} \, \bigr|
    \;\leq\;
    \frac{2^{\gamma-1}\,|\dom|^{1/2}\,\norm{w}_{\Lp{\infty}(\dom)}^{\gamma/2}}
         {\gamma\,\norm{w}_{\Lp{\gamma}(\dom)}^{\gamma/2 - 1}}\,
    (\mtil\,\delta)^{-1/2}.
\end{equation}
For $w \in U(B^{s}_{\gamma}(\Lp{\gamma}(\dom)))$ with $s > 2/\gamma$, the embedding
$B^{s}_{\gamma} \hookrightarrow C(\bar{\dom})$ bounds
$\norm{w}_{\Lp{\infty}}$ by $C(\dom, s, \gamma)\,\norm{w}_{B}$, yielding the
Monte Carlo rate $\mtil^{-1/2}$ uniformly over the model class.
\end{proposition}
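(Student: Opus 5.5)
The plan is to reduce the norm comparison to a comparison of $\gamma$-th powers, where the discrete functional is an unbiased sample mean. Then I would apply Chebyshev's inequality and pull the estimate back to the norms with an elementary inequality for $t \mapsto t^{\gamma}$. No geometry is used, which is consistent with the absence of any curvature dependence in \eqref{eq:lgamma-prob}.

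\textbf{Step 1 (unbiased estimator and variance).} Set $Y_i := |\dom|\,|w(\bx_i)|^{\gamma}$, $A := \norm{w}^{\gamma}_{\Lp{\gamma}(\dom)}$ and $A^{\ast} := \norm{w}^{\ast\gamma}_{\Lp{\gamma}(\dom)} = \mtil^{-1}\sum_i Y_i$. Since the $\bx_i$ are i.i.d.\ with density $|\dom|^{-1}$, we have $\mathbb{E}\,Y_i = A$. The variance satisfies
\[
\operatorname{Var} Y_i \le \mathbb{E}\,Y_i^{2} = |\dom|\int_{\dom}|w|^{2\gamma} \le |\dom|\,\norm{w}^{\gamma}_{\Lp{\infty}(\dom)}\,A .
\]
This is the point where $w \in \Lp{\infty}$ enters: half of the $2\gamma$ powers are bounded by $\norm{w}_{\Lp{\infty}}$ and the other half are kept as $A$. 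Independence then gives $\operatorname{Var} A^{\ast} \le |\dom|\,\norm{w}^{\gamma}_{\Lp{\infty}}A/\mtil$.

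\textbf{Step 2 (Chebyshev).} With probability at least $1-\delta$,
\[
|A^{\ast}-A| \le |\dom|^{1/2}\,\norm{w}^{\gamma/2}_{\Lp{\infty}}\,\norm{w}^{\gamma/2}_{\Lp{\gamma}}\,(\mtil\delta)^{-1/2}.
\]

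\textbf{Step 3 (from powers to norms).} Write $a = \norm{w}_{\Lp{\gamma}} > 0$ and $b = \norm{w}^{\ast}_{\Lp{\gamma}} \ge 0$. For $\gamma \ge 1$ the elementary inequality $|a^{\gamma}-b^{\gamma}| \ge a^{\gamma-1}|a-b|$ holds.
\begin{itemize}
\item If $a \ge b$, then $a^{\gamma}-b^{\gamma}-a^{\gamma-1}(a-b) = b\,(a^{\gamma-1}-b^{\gamma-1}) \ge 0$.
\item If $b > a$, then $b^{\gamma}-a^{\gamma} \ge b^{\gamma-1}(b-a) \ge a^{\gamma-1}(b-a)$.
\end{itemize}
Dividing the Step 2 bound by $a^{\gamma-1}$ yields a prefactor of the form $|\dom|^{1/2}\norm{w}^{\gamma/2}_{\Lp{\infty}}\,\norm{w}^{1-\gamma/2}_{\Lp{\gamma}}$. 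This matches the structure of \eqref{eq:lgamma-prob}, including the denominator $\norm{w}^{\gamma/2-1}_{\Lp{\gamma}}$.

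The model-class statement then follows at once. The embedding $B^{s}_{\gamma}\hookrightarrow C(\bar\dom)$ for $s>2/\gamma$ bounds $\norm{w}_{\Lp{\infty}}$ by a multiple of $\norm{w}_{B}$. Combined with $\norm{w}_{\Lp{\gamma}} \le |\dom|^{1/\gamma}\norm{w}_{\Lp{\infty}}$, and using $1-\gamma/2 \ge 0$ for $\gamma \le 2$, this gives a bound of the form $C\norm{w}_{B}\,(\mtil\delta)^{-1/2}$.

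\textbf{Main obstacle.} I expect the difficulty to be matching the exact numerical prefactor $2^{\gamma-1}/\gamma$, not the rate. Step 3 as written yields the constant $1$, which is dominated by $2^{\gamma-1}/\gamma$ only at $\gamma = 1$ and for $\gamma \ge 2$; for $\gamma \in (1,2)$ one has $2^{\gamma-1}/\gamma < 1$. So for intermediate $\gamma$ I would first try to sharpen Step 3 by splitting into cases.
\begin{itemize}
\item On the event $A^{\ast} \ge A$, concavity of $t\mapsto t^{1/\gamma}$ gives $b-a \le \gamma^{-1}A^{1/\gamma-1}(A^{\ast}-A)$, which supplies the factor $1/\gamma$.
\item On $A^{\ast} < A$, the concavity bound runs the wrong way. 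There I would look for a convexity estimate on $|w|^{\gamma}$, for instance $(x+y)^{\gamma} \le 2^{\gamma-1}(x^{\gamma}+y^{\gamma})$, which is plausibly where the factor $2^{\gamma-1}$ originates, or I would settle for the constant $1$.
\end{itemize}
If the stated constant cannot be recovered on the lower-deviation event, I would record the bound with prefactor $\max(1, 2^{\gamma-1}/\gamma)$. This does not change the Monte Carlo rate $\mtil^{-1/2}$ or the uniformity over the model class.
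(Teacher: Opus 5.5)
Your Steps 1 and 2 are the paper's argument: Chebyshev applied to the $\gamma$-th powers, with $\int_{\dom}|w|^{2\gamma}\le\norm{w}_{\Lp{\infty}}^{\gamma}\int_{\dom}|w|^{\gamma}$. Your Step 3 inequality $|x^{\gamma}-y^{\gamma}|\ge\max(x,y)^{\gamma-1}|x-y|$ is correct and needs no assumption on $\mtil$.

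\textbf{The gap.} It is the one you flag yourself. Step 3 gives prefactor $1$. For $\gamma\in(1,2)$, which contains the paper's working exponent $\gamma=1+1/\log\mtil$, one has $2^{\gamma-1}/\gamma<1$, so \eqref{eq:lgamma-prob} is not proved there. Your bound does imply the stated one only at $\gamma=1$ and $\gamma\ge2$, as you note. Recording $\max(1,2^{\gamma-1}/\gamma)$ changes the statement rather than proving it.

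\textbf{Where the paper's constant comes from.} It is not a convexity inequality for $|w|^{\gamma}$. Use the paper's labelling (opposite to yours): $b:=\norm{w}_{\Lp{\gamma}(\dom)}$, $a:=\norm{w}^{\ast}_{\Lp{\gamma}(\dom)}$, and $K:=|\dom|^{1/2}\norm{w}_{\Lp{\infty}}^{\gamma/2}b^{\gamma/2}$. The paper imposes the budget condition $\mtil\ge K^{2}\delta^{-1}(1-2^{-\gamma})^{-2}b^{-2\gamma}$. On the Chebyshev event this forces $a^{\gamma}\ge 2^{-\gamma}b^{\gamma}$, i.e.\ $a\ge b/2$. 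The mean value theorem then gives $|a^{\gamma}-b^{\gamma}|\ge\gamma\min(a,b)^{\gamma-1}|a-b|\ge\gamma(b/2)^{\gamma-1}|a-b|$. The factor $2^{\gamma-1}$ is just $(1/2)^{1-\gamma}$ from this lower bound on the discrete norm.

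\textbf{That condition is not a hypothesis of the proposition.} Your suspicion that the lower-deviation side cannot be closed otherwise is justified, at least for any argument that uses only the Chebyshev information. Put $A:=b^{\gamma}$, $A^{\ast}:=a^{\gamma}$, $\eta:=K(\mtil\delta)^{-1/2}$. Take $\mtil\delta>1$ and let $w$ be the indicator of a set of measure $|\dom|/(\mtil\delta)$. Then $\eta=A$. Every sample configuration avoiding that set satisfies $|A^{\ast}-A|\le\eta$, yet gives $a=0$. Hence $|a-b|=b$, which exceeds the claimed right-hand side $\tfrac{2^{\gamma-1}}{\gamma}b$.

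So no argument using only $|A^{\ast}-A|\le\eta$ reaches a constant below $1$. The paper obtains $2^{\gamma-1}/\gamma$ only under its extra large-$\mtil$ condition. Your threshold-free constant $1$ is exactly what the Chebyshev route yields without it. To prove the proposition as written, you would need either to add that condition as a hypothesis, or a lower-tail estimate for the nonnegative variables $|w(\bx_{i})|^{\gamma}$ sharper than Chebyshev. Neither your proposal nor the paper supplies one.

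\textbf{An advantage of your version.} The paper's condition involves the ratio $\norm{w}_{\Lp{\infty}}^{\gamma}/\norm{w}_{\Lp{\gamma}}^{\gamma}$, which is unbounded over the model class (narrow bumps). That undermines the final uniformity claim. Your unconditional bound avoids this: together with your use of $1-\gamma/2\ge0$, it gives the uniform $\mtil^{-1/2}$ rate cleanly for $\gamma\in[1,2]$.
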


\begin{proof}
The argument mirrors~\cite[Proposition~6]{singh2026cutpinn}, with the square
replaced by the $\gamma$-th power. Let $Y_{i} := |w(\bx_{i})|^{\gamma}$ and
$S_{\mtil} := \mtil^{-1} \sum_{i=1}^{\mtil} Y_{i}$. Since the $\bx_{i}$ are
\emph{i.i.d.}\ uniform on $\dom$,
\[
    \mathbb{E}[Y_{1}]
    = \frac{1}{|\dom|} \int_{\dom} |w|^{\gamma}
    = \frac{\norm{w}_{\Lp{\gamma}(\dom)}^{\gamma}}{|\dom|},
    \qquad
    \mathrm{Var}(Y_{1})
    \leq \mathbb{E}[Y_{1}^{2}]
    \leq \frac{\norm{w}_{\Lp{\infty}(\dom)}^{\gamma}\,
               \norm{w}_{\Lp{\gamma}(\dom)}^{\gamma}}{|\dom|},
\]
the second bound using
$\int_{\dom} |w|^{2\gamma} \leq \norm{w}_{\Lp{\infty}}^{\gamma}
\int_{\dom} |w|^{\gamma}$. Hence, we get
\[
\mathrm{Var}(S_{\mtil}) = \mathrm{Var}(Y_{1})/\mtil \leq
\norm{w}_{\Lp{\infty}}^{\gamma}\,\norm{w}_{\Lp{\gamma}(\dom)}^{\gamma}/
(|\dom|\,\mtil).
\]
Therefore, Chebyshev's inequality yields, with probability at
least $1 - \delta$,
$$|S_{\mtil} - \mathbb{E}[Y_{1}]| \leq
\sqrt{\mathrm{Var}(S_{\mtil})/\delta} \leq
\norm{w}_{\Lp{\infty}}^{\gamma/2}\,\norm{w}_{\Lp{\gamma}(\dom)}^{\gamma/2}/
\sqrt{|\dom|\,\mtil\,\delta}.$$ 
Multiplying through by $|\dom|$,
\begin{equation}
\label{eq:lgamma-prob-pow}
    \bigl| \, \norm{w}_{\Lp{\gamma}(\dom)}^{\ast\gamma}
           - \norm{w}_{\Lp{\gamma}(\dom)}^{\gamma} \, \bigr|
    \;\leq\;
    |\dom|^{1/2}\,\norm{w}_{\Lp{\infty}(\dom)}^{\gamma/2}\,
    \norm{w}_{\Lp{\gamma}(\dom)}^{\gamma/2}\,(\mtil\,\delta)^{-1/2}
    \;=:\; K\,(\mtil\,\delta)^{-1/2},
\end{equation}
which holds unconditionally on the probability-$(1-\delta)$ event. To pass
from the $\gamma$-power to the norm, write $a := \norm{w}_{\Lp{\gamma}(\dom)}^{\ast}$
and $b := \norm{w}_{\Lp{\gamma}(\dom)} > 0$. Two consequences
of~\eqref{eq:lgamma-prob-pow} are used in turn. First, since
$a^{\gamma} \geq b^{\gamma} - K(\mtil\delta)^{-1/2}$, the budget threshold
$\mtil \geq K^{2}\,\delta^{-1}(1 - 2^{-\gamma})^{-2}\,b^{-2\gamma}$ forces
$a^{\gamma} \geq 2^{-\gamma} b^{\gamma}$, that is $a \geq b/2$; this is a
deterministic consequence of the bound, not a further probabilistic
assumption. Second, on this range the mean value theorem applied to
$t \mapsto t^{\gamma}$ gives $|a^{\gamma} - b^{\gamma}| \geq
\gamma\,\min(a,b)^{\gamma-1}|a - b| \geq \gamma\,(b/2)^{\gamma-1}|a-b|$, so
\[
    |a - b| \leq \frac{2^{\gamma-1}}{\gamma\,b^{\gamma-1}}\,
                |a^{\gamma} - b^{\gamma}|.
\]
Substituting~\eqref{eq:lgamma-prob-pow} gives~\eqref{eq:lgamma-prob}. At
$\gamma = 2$ the threshold is unnecessary: the identity
$|a-b|(a+b) = |a^{2}-b^{2}|$ gives $|a-b| \leq |a^{2}-b^{2}|/b$ for all
$a \geq 0$, $b > 0$, recovering~\cite[Proposition~6]{singh2026cutpinn} with
the $\norm{w}_{\Lp{\gamma}}$ factors cancelling completely. The Besov special
case follows from $\norm{w}_{\Lp{\infty}(\dom)} \leq
C(\dom, s, \gamma)\,\norm{w}_{B}$ via the embedding for $s > 2/\gamma$,
yielding the Monte Carlo rate $\mtil^{-1/2}$.
\end{proof}

\subsection{Main a priori bound}
\label{sec:apriori}

Combining the continuous stability of Section~\ref{sec:cont-stability} with
the two discrete comparisons gives the main result. The following theorem
bounds the $\Hone$ error of an arbitrary trial network by the computable
loss $\Lgam(v)$ and a discretisation remainder that decays in both
collocation budgets.

\begin{theorem}[\textbf{A priori bound for the $\Lp{\gamma}$ consistent CutPINN}]
\label{thm:apriori-lgamma}
Let $\dom \subset \R^{2}$ be a bounded $C^{2}$ level-set domain and let the
coefficients of~\eqref{eq:cdr} satisfy the assumptions of
\Cref{sec:cdr-pde}. Let $X$ and $Z$ be interior and boundary collocation
sets as in \Cref{sec:collocation} with $|X| = \mtil$ and $|Z| = m$.
Let $v \in \Hone(\dom)$ with $\Delta v$ continuous on $\bar{\dom}$ and
suppose $v \in U(B^{\bar{s}}_{\gamma}(\Lp{\gamma}(\dom)))$ with $\bar{s} >
2/\gamma$ and $\gamma \in (1, 2]$, and write
$\norm{v}_{U} := \max\{\norm{\Lop v}_{B}, \norm{\mathrm{Tr}(v)}_{\mathrm{Tr}(B)}\}$.
Then for every $\eps > 0$,
\begin{equation}
\label{eq:apriori-lgamma}
    \norm{u - v}_{\Hone(\dom)}
    \;\leq\;
    C(\eps^{-1}, \kappa_{\max}, L, \norm{\bb}_{\infty}, c)\,\tfrac{\gamma}{\gamma-1}
    \Bigl[ \Lgam(v)^{1/2} + \bigl(1 + \norm{v}_{U}\bigr) R(\mtil, m) \Bigr],
\end{equation}
where
$R(\mtil, m) = \mtil^{-\alpha_{\gamma}} + m^{-\bar{s}+1}$ and
$\alpha_{\gamma} = \min\!\bigl(\tfrac{\bar{s}}{2} - \bigl(\tfrac{1}{\gamma} - \tfrac{1}{2}\bigr)_{+}, \ 1/(2\gamma) \bigr)$.
\end{theorem}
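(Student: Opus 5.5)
The plan is to chain the three ingredients of this section. Starting from the continuous stability estimate \eqref{eq:continuous-stability}, I would split the error into an interior residual $\norm{f - \Lop v}_{\Hminusone(\dom)}$ and a boundary residual $\norm{g - v}_{\HhalfBdry}$, and bound each by the corresponding discrete piece of $\Lgam(v)$ plus a discretisation remainder. The constant $C_{\mathrm{stab}}(\eps^{-1})$ is carried through unchanged; the geometry constants enter only through Theorem~\ref{thm:boundary-norm-equiv} and Proposition~\ref{prop:lgamma-cut}. The explicit factor $\gamma/(\gamma-1)$ is expected to come from the embedding step.

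\textbf{Interior term.} First I would prove $\Lp{\gamma}(\dom) \hookrightarrow \Hminusone(\dom)$ with a tracked constant. By duality,
\[
\norm{w}_{\Hminusone} = \sup \int w\phi / \norm{\phi}_{\Hone_0} \le \norm{w}_{\Lp{\gamma}}\,\norm{\phi}_{\Lp{\gamma'}} ,
\]
and the two-dimensional Sobolev inequality $\norm{\phi}_{\Lp{q}} \lesssim \sqrt{q}\,\norm{\phi}_{\Hone}$ applies with $q = \gamma' = \gamma/(\gamma-1)$. This gives a constant $\lesssim (\gamma/(\gamma-1))^{1/2} \le \gamma/(\gamma-1)$, so the theorem's factor is a safe overestimate. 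With $\gamma = 1 + 1/\log\mtil$ it equals $1 + \log\mtil$, which is the $(\log\mtil)$ of the introduction.

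Next I would apply Proposition~\ref{prop:lgamma-cut} to $w = f - \Lop v$ with $s = \bar s$. This gives $\norm{f - \Lop v}_{\Lp{\gamma}} \le \norm{f - \Lop v}^{\ast}_{\Lp{\gamma}} + C\norm{f - \Lop v}_{B}\,\mtil^{-\alpha_\gamma}$. The discrete norm is $|\dom|^{1/\gamma}$ times the interior part of $\Lgam(v)^{1/2}$, since the $\Lp{\gamma}$ term of \eqref{eq:Lgamma} has power $2/\gamma$. I would bound $\norm{f-\Lop v}_B \le \norm{f}_B + \norm{v}_U$ and absorb $\norm{f}_B$, which is fixed data, into the constant; this is the source of the factor $(1+\norm{v}_U)$.

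One mismatch needs attention: Proposition~\ref{prop:lgamma-cut} is stated for the grid $X = G_{k,r}\cap\dom$, while \Cref{sec:collocation} draws $X$ by sampling. I would state the theorem for the grid configuration, or equivalently read ``as in \Cref{sec:collocation}'' as allowing it, so that the deterministic rate $\mtil^{-\alpha_\gamma}$ applies.

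\textbf{Boundary term.} I would apply the first inequality of Theorem~\ref{thm:boundary-norm-equiv} to $g - v$. This gives $\norm{g-v}_{\HhalfBdry} \lesssim \norm{g-v}^{\ast}_{\HhalfBdry} + (\norm{g}_{\mathrm{Tr}(B)} + \norm{v}_U)\,m^{-\bar s+1}$, where the first term is at most $\Lgam(v)^{1/2}$.

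\textbf{Assembly and main obstacle.} Adding the two bounds and using $a + b \le 2\max(a,b)$ gives \eqref{eq:apriori-lgamma}. The hard step is justifying the regularity hypotheses. Proposition~\ref{prop:lgamma-cut} needs $f - \Lop v \in B^{\bar s}_\gamma$ with $\bar s > 2/\gamma$, and this is exactly why $\norm{v}_U$ is defined through $\norm{\Lop v}_B$. Theorem~\ref{thm:boundary-norm-equiv} needs the trace in $\mathrm{Tr}\,U(B^{\bar s}_\infty(\Lp2))$, whereas $v$ is assumed only in $B^{\bar s}_\gamma(\Lp\gamma)$. I would handle this by reading $\norm{\mathrm{Tr}(v)}_{\mathrm{Tr}(B)}$ as the trace model-class norm required by Theorem~\ref{thm:boundary-norm-equiv}, and I would state that assumption explicitly.
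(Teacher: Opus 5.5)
Your proposal is correct and follows the same route as the paper's proof. Both arguments run the continuous stability estimate, the embedding $\Lp{\gamma}(\dom)\hookrightarrow\Hminusone(\dom)$, Proposition~\ref{prop:lgamma-cut} applied to $w = f - \Lop v$ and Theorem~\ref{thm:boundary-norm-equiv} applied to $g - v$, and then assemble against $\Lgam(v)^{1/2}$. Your H\"older--Trudinger derivation of the embedding constant actually gives the sharper factor $(\gamma/(\gamma-1))^{1/2}$, where the paper only asserts $\gamma/(\gamma-1)$. The points you flag are real omissions in the stated theorem that the paper's proof does not address: the grid $X$ versus the sampled $X$, the class $B^{\bar{s}}_{\gamma}(\Lp{\gamma})$ versus the trace class of Theorem~\ref{thm:boundary-norm-equiv}, and the need for bounded $\norm{f}_{B}$ and $\norm{g}_{\mathrm{Tr}(B)}$ (the paper tacitly normalises $\norm{f}_{B}\le 1$). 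Making them explicit assumptions is the right call.
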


\begin{remark}[The prescribed $\gamma$ and the $\Lp{2}$ case]
The bound holds for every $\gamma \in (1,2]$, with the prefactor
$\gamma/(\gamma-1)$ measuring the cost of the $\Lp{\gamma} \hookrightarrow
\Hminusone$ embedding. The proposed method takes $\gamma = 1 + 1/\log\mtil$,
at which $\gamma/(\gamma-1) = 1 + \log\mtil$ and the interior norm carries no
free parameter; the consistent $\Lp{2}$ loss is the endpoint $\gamma = 2$,
where the prefactor is the constant $2$ and the embedding is non-degenerate.
The intermediate $\gamma$ trades a mild $\log\mtil$ growth in the constant
for the layer suppression established in Remark~\ref{rem:l2-fails}, which is the
exchange the convection-dominated regime rewards.
\end{remark}

\begin{proof}
The argument follows the four-step template
of~\cite[Theorem~7.2]{bonito2025}, \cite[Theorem~7]{singh2026cutpinn}, with
the operator and the interior norm adapted to the convection-diffusion
setting.

\vskip 2mm
\noindent
\textbf{Step 1: Continuous stability.} The coercivity assumption
$c - \tfrac12 \nabla\!\cdot\!\bb \geq \kappa_{0} > 0$ makes the weak form
of~\eqref{eq:cdr} coercive in the energy norm
$\enorm{w}^{2} := \eps\norm{\nabla w}_{\Lp{2}}^{2}
+ \kappa_{0}\norm{w}_{\Lp{2}}^{2}$, and converting to $\Hone$ through
$\norm{\nabla w}_{\Lp{2}} \leq \eps^{-1/2}\enorm{w}$ gives the stability
estimate~\eqref{eq:continuous-stability}. The two conversions compound: the
duality pairing $\langle \Lop w, w\rangle \leq \norm{\Lop w}_{\Hminusone}
\norm{w}_{\Hone}$ and the energy-to-$\Hone$ passage each contribute
$\eps^{-1/2}$, so $C_{\mathrm{stab}} \sim \eps^{-1}$;
see~\cite{roos2008robust}. Applied to $w = u - v$, this
reduces the $\Hone$ error to an $\Hminusone$ interior residual
$\norm{f - \Lop v}_{\Hminusone(\dom)}$ and an $\HhalfBdry$ boundary residual
$\norm{g - v}_{\HhalfBdry}$.

\vskip 2mm
\noindent
\textbf{Step 2: Interior embedding.} The interior residual is controlled in
$\Hminusone$ by its $\Lp{\gamma}$ norm through the Sobolev embedding
$\Lp{\gamma}(\dom) \hookrightarrow \Hminusone(\dom)$, applied to
$w = f - \Lop v$. In $d = 2$ this embedding holds for every $\gamma > 1$;
the endpoint $\gamma = 1$ fails, since $\Hone(\dom)$ does not embed into
$\Lp{\infty}(\dom)$, and correspondingly the embedding constant degenerates
as $\gamma \to 1^{+}$, bounded by a multiple of $\gamma/(\gamma - 1)$,
which is the logarithmic factor identified for $p \leq 1$
in the $d = 2$ analysis of~\cite{bonito2025}. For any $\gamma \in (1,2]$ the
embedding contributes a constant factor of order $\gamma/(\gamma - 1)$, finite and
independent of $\eps$; at the prescribed $\gamma = 1 + 1/\log\mtil$, which
approaches the endpoint as the interior budget grows, this equals
$1 + \log\mtil$, the $O(\log\mtil)$ factor. The
same optimal-recovery treatment of a convective residual is used at the
$\Lp{2}$ level by~\cite{mishra2026structure}; the $\Lp{\gamma}$ exponent and the
resulting $\gamma/(\gamma-1)$ factor are what the convection-dominated regime
adds here.

\vskip 2mm
\noindent
\textbf{Step 3: Discrete interior and boundary comparison.} Proposition~\ref{prop:lgamma-cut} with
$w = f - \Lop v$ and $\norm{f}_{B} \leq 1$ replaces the continuous
$\Lp{\gamma}$ residual norm by its discrete counterpart, with error
$(1+\norm{v}_{U})\,\mtil^{-\alpha_{\gamma}}$.
Theorem~\ref{thm:boundary-norm-equiv} replaces the continuous $\HhalfBdry$
boundary residual norm by its discrete counterpart, with error
$(1+\norm{v}_{U})\,m^{-\bar{s}+1}$; this is where the geometry, through
$\kappa_{\max}$ and the chord-arc constant, enters.

\vskip 2mm
\noindent
\textbf{Assembly.} Combining Steps~1--3, applying the elementary inequality
$a + b \leq \sqrt{2}\,(a^{2}+b^{2})^{1/2}$ to the two discrete residual
terms and identifying the sum of their squares with $\Lgam(v)$, whose interior term
carries the matching outer power $2/\gamma$ of Definition~\ref{def:losses},
yields~\eqref{eq:apriori-lgamma}. The $\gamma/(\gamma-1)$ embedding constant of
Step~2 multiplies the entire interior contribution, both the computable loss
$\Lgam(v)^{1/2}$ and its recovery remainder $\mtil^{-\alpha_{\gamma}}$; we
display it as a single prefactor on the whole bracket, which only weakens
the boundary term $m^{-\bar{s}+1}$ by the same harmless $\gamma/(\gamma-1)$
factor (logarithmic at the prescribed $\gamma$).
\end{proof}

\begin{remark}[Reading the bound: $\eps$-dependence and the predicted rate]
\label{rem:rate}
The constant $C$ in~\eqref{eq:apriori-lgamma} depends on $\eps^{-1}$,
matching the $\Hone$ stability constant of the continuous
problem~\cite{roos2008robust}, so the discretisation introduces no loss in
$\eps$ beyond what the continuous theory itself carries; the route to an
$\eps$-uniform version, through a streamline-diffusion stability analysis
on the cut domain, is outlined in \Cref{sec:conclusion}. The bound is
designed to be read at fixed $\eps$: the model-class norm $\norm{v}_{U}$
involves $\norm{\Lop v}_{B}$ and hence the term $\eps\Delta v$, which grows
as the network resolves the $O(\eps)$ layer, and the experiments
accordingly fix $\eps$ and vary the initialisation, which is precisely the
comparison the bound supports.
At the budget $\mtil = 1600$ one has $\gamma \approx 1.136$, so for
the smooth data of Section~\ref{sec:exp-setup} the interior exponent is
governed by the cut floor, $\alpha_{\gamma} = 1/(2\gamma) \approx 0.44$,
rather than the interpolation rate, and the boundary term contributes
$m^{-\bar{s}+1}$. The experiments below fix $(\mtil, m)$ and vary $\eps$, so
they probe the robustness of training as the layer sharpens, not the decay
of the bound in the budget; a direct measurement of the $\mtil^{-\alpha_{\gamma}}$
rate would require a separate budget-refinement study at fixed $\eps$.
\end{remark}

\section{Numerical Experiments}
\label{sec:numerics}

All experiments use a PyTorch~\cite{paszke2019pytorch} implementation
running on a single Intel Core Ultra 7 155H CPU, with each training run
completing in well under a minute. Code, data, and scripts to reproduce
every figure and table are available at
\url{https://github.com/maneeshkrsingh/consistent-cutpinn}.

\subsection{Setup}
\label{sec:exp-setup}

\textbf{Manufactured solution.}
The exact solution is constructed to carry an explicit boundary layer
aligned with the streamline direction. Fixing
$\alpha = \pi/3$, $\bb = (\cos\alpha, \sin\alpha)$, $c = 1$, and the
streamline and cross-streamline coordinates
$s_{\mathrm{c}} = x_{1}\cos\alpha + x_{2}\sin\alpha$,
$t_{\mathrm{c}} = -x_{1}\sin\alpha + x_{2}\cos\alpha$
normalised to $[0, 1]$, define
\begin{equation}
\label{eq:manufactured}
    u(x_{1}, x_{2}) = \sin\bigl(\pi t_{\mathrm{c,norm}}\bigr)
                      \cdot \psi_{\eps}\bigl(s_{\mathrm{c,norm}}\bigr),
    \quad
    \psi_{\eps}(\sigma) = \sigma
        - \frac{e^{(\sigma-1)/\eps} - e^{-1/\eps}}{1 - e^{-1/\eps}}.
\end{equation}
The factor $\psi_{\eps}$ is the classical 1D convection-diffusion layer
profile, smooth on $[0, 1 - O(\eps)]$ with an $O(\eps)$-wide boundary
layer at $\sigma = 1$. The data $f$ and $g$ follow from
inserting~\eqref{eq:manufactured} into~\eqref{eq:cdr}.

\vskip 2mm

\textbf{Domains:}
We use two test domains, one flat and one curved domain:
\begin{itemize}
    \item \textbf{Rectangle.} $\dom = (0,1)^{2}$. Flat boundary.
    \item \textbf{Disk.} $\bphi(\bx) = \norm{\bx - (0.5, 0.5)} - 0.4$,
          embedded in $[0,1]^{2}$. Constant curvature $\kappa = 2.5$; the
          curved test geometry.
\end{itemize}

\textbf{Computational budget:}
Interior budget $\mtil = 1600$, boundary budget $m = 40$, fixed throughout
the experiments. At this budget,
$\gamma = 1 + 1/\log(1600) \approx 1.136$. \\

\textbf{Error measurement:}
We estimate the $\Hone$ error norm by Monte Carlo quadrature over
$10{,}000$ points drawn independently by rejection sampling, evaluating
$\nabla v_{\theta}$ by automatic differentiation of the trained network. The
comparison at $s = 6$ is run over 20 independent seeds; at
$s = 2$ and $s = 4$, where every run is stable, five seeds suffice. \\

We report the median over non-diverged seeds together with the
divergence count and a Wilson score interval (SI) on the rate. A run
is counted diverged if its $\Hone$ error exceeds $2 \times 10^{-2}$; this
threshold sits cleanly above the converged-run band (typical $\Hone$ at
$s = 6$ is $\sim\!10^{-3}$) and below the failure cluster ($\sim\!10^{-1}$),
and the reported rates are insensitive to varying the threshold across
$[2 \times 10^{-2}, 10^{-1}]$. The median statistic is used
because in the convection-dominated regime the seed distribution is
bimodal and the mean is dominated by occasional diverged runs.
\Cref{fig:setup} illustrates the experimental setup on the two domains.

\begin{figure}[!htb]
    \centering
    \includegraphics[width=0.895\linewidth]{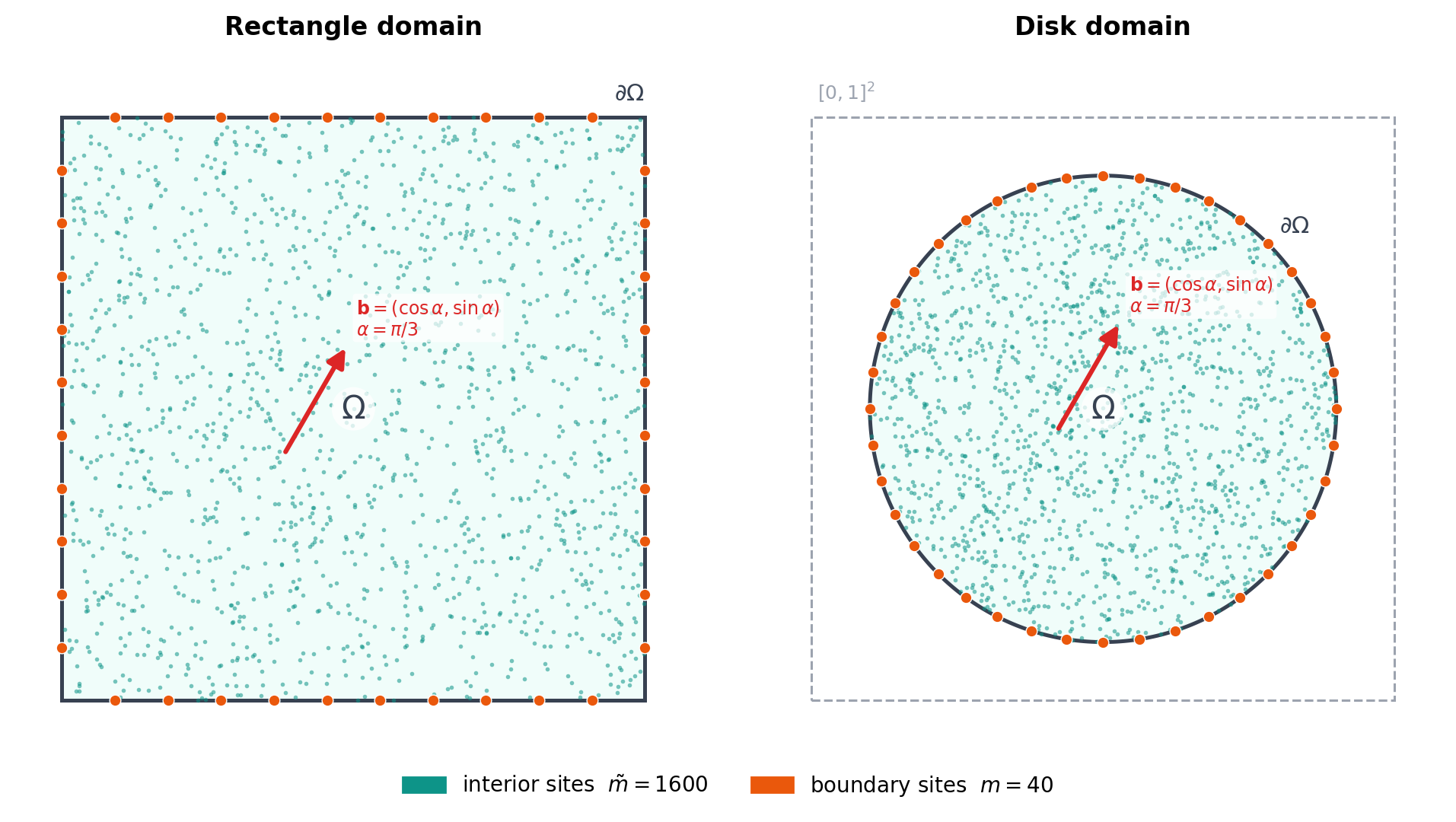}
    \caption{Experimental setup. Left: rectangle $\dom=(0,1)^{2}$; right:
    disk $\dom=\{\norm{\bx-(0.5,0.5)}<0.4\}$. Teal: $\mtil=1600$ interior
    points $X$; orange: $m=40$ boundary points $Z$. Red arrow: convection
    $\bb=(\cos\alpha,\sin\alpha)$, $\alpha=\pi/3$.}
    \label{fig:setup}
\end{figure}

\subsection{Experiment 1: Convergence in \texorpdfstring{$\eps$}{epsilon} on the rectangle}
\label{sec:exp-rectangle}

\Cref{tab:rectangle} reports the $\Hone$ error on the rectangle for the
three losses across $\eps = 2^{-s}$, $s \in \{2, 4, 6\}$.

\begin{table}[t]
    \centering
    \caption{$\Hone$ error on the rectangle $(0,1)^{2}$ across
    $\eps = 2^{-s}$, reported as the \emph{median over non-diverged seeds}
    (diverged: $\Hone > 2 \times 10^{-2}$; the count $d/n$ is listed in the
    lower block, with the Wilson SI on the rate at $s = 6$). Five
    seeds at $s = 2, 4$ (where training is uniformly stable); 20 seeds at
    $s = 6$. All three losses use the identical AdamW$\to$L-BFGS schedule of
    \Cref{sec:network}, so the comparison isolates the effect of the loss.
    The rectangle gives a complete threshold-robust separation at
    $s = 6$: standard PINN fails on every seed while $\Lgam$ never diverges.
    Bold marks the proposed $\Lgam$ method; ``--'' denotes all seeds
    diverged.}
    \label{tab:rectangle}
    \begin{tabular}{lccc}
        \toprule
        Loss & $\eps = 2^{-2}$
             & $\eps = 2^{-4}$
             & $\eps = 2^{-6}$ \\
        \midrule
        \multicolumn{4}{l}{\emph{median $\Hone$ error (non-diverged seeds)}} \\
        $\Lpinn$ (standard PINN)         & $5.10 \times 10^{-3}$
                                         & $6.21 \times 10^{-3}$
                                         & -- \\
        $\Lcons$ ($\Lp{2}$ + $\Hhalf$)   & $6.61 \times 10^{-4}$
                                         & $8.65 \times 10^{-4}$
                                         & $8.67 \times 10^{-3}$ \\
        $\Lgam$ ($\Lp{\gamma}$ + $\Hhalf$) & $\mathbf{7.86 \times 10^{-4}}$
                                         & $\mathbf{6.62 \times 10^{-4}}$
                                         & $\mathbf{4.96 \times 10^{-3}}$ \\
        \midrule
        \multicolumn{4}{l}{\emph{divergence count and rate (Wilson SI at $s = 6$)}} \\
        $\Lpinn$                         & $0/5$ & $2/5$
                                         & $20/20 = 100\%~[84,100]$ \\
        $\Lcons$                         & $0/5$ & $0/5$
                                         & $2/20 = 10\%~[3,30]$ \\
        $\Lgam$                          & $\mathbf{0/5}$ & $\mathbf{0/5}$
                                         & $\mathbf{0/20 = 0\%~[0,16]}$ \\
        \bottomrule
    \end{tabular}
\end{table}

For runs that converge, the three losses deliver comparable accuracy on
the rectangle: at $s = 2$ and $s = 4$ all sit near $10^{-3}$ in median
$\Hone$. The separation appears in \emph{reliability} as $\eps$ decreases.
At $s = 6$ the contrast is sharp. Standard PINN diverges on every seed
(20/20 above the threshold), the consistent loss with $\Lp{2}$ interior
diverges on two of twenty seeds, and the $\Lp{\gamma}$ interior never
diverges (0/20). The 20/20 versus 0/20 contrast is robust across any
threshold between $2\times 10^{-2}$ and $10^{-1}$, so it reflects a
training instability of standard PINN rather than a threshold artefact.
Most standard-PINN seeds settle near $\Hone \approx 4\times 10^{-2}$, a
qualitatively wrong solution, and none reaches an accurate fit. On the
seeds where $\Lcons$ converges, its accuracy is close to that of $\Lgam$;
it is the $\Lp{\gamma}$ interior that makes training dependable as the
layer sharpens.

\subsection{Experiment 2: Convergence in
\texorpdfstring{$\eps$}{epsilon} on the disk}
\label{sec:exp-disk}

\Cref{tab:disk} reports the same experiment on the disk, the curved
geometry.

\begin{table}[t]
    \centering
    \caption{$\Hone$ error on the disk across $\eps = 2^{-s}$, reported as
    the \emph{median over non-diverged seeds} (diverged: $\Hone > 2 \times
    10^{-2}$; the count $d/n$ is in the lower block, with Wilson SI on
    the rate at $s = 6$). Five seeds at $s = 2, 4$ (uniformly stable); 20
    seeds at $s = 6$. On the disk the divergence ordering
    $\Lgam \le \Lcons \le \Lpinn$ holds across the range, but the rates are
    low for all three losses and the 95\% intervals overlap at $s = 6$;
    the separation is suggestive on the disk but not statistically resolved
    by 20 seeds. Bold marks the proposed $\Lgam$ method.}
    \label{tab:disk}
    \begin{tabular}{lccc}
        \toprule
        Loss & $\eps = 2^{-2}$
             & $\eps = 2^{-4}$
             & $\eps = 2^{-6}$ \\
        \midrule
        \multicolumn{4}{l}{\emph{median $\Hone$ error (non-diverged seeds)}} \\
        $\Lpinn$                         & $3.30 \times 10^{-3}$
                                         & $2.11 \times 10^{-3}$
                                         & $1.21 \times 10^{-3}$ \\
        $\Lcons$                         & $6.28 \times 10^{-4}$
                                         & $3.12 \times 10^{-4}$
                                         & $7.26 \times 10^{-4}$ \\
        $\Lgam$                          & $\mathbf{6.39 \times 10^{-4}}$
                                         & $\mathbf{2.86 \times 10^{-4}}$
                                         & $\mathbf{5.02 \times 10^{-4}}$ \\
        \midrule
        \multicolumn{4}{l}{\emph{divergence count and rate (Wilson SI at $s = 6$)}} \\
        $\Lpinn$                         & $0/5$ & $1/5$
                                         & $2/20 = 10\%~[3,30]$ \\
        $\Lcons$                         & $0/5$ & $0/5$
                                         & $2/20 = 10\%~[3,30]$ \\
        $\Lgam$                          & $\mathbf{0/5}$ & $\mathbf{0/5}$
                                         & $\mathbf{1/20 = 5\%~[1,24]}$ \\
        \bottomrule
    \end{tabular}
\end{table}

The disk shows a less clear-cut separation than the rectangle. For seeds
that converge, the three losses reach comparable median $\Hone$ error
across $s \in \{2, 4, 6\}$: $\Lgam$ and $\Lcons$ both lie near
$5\times 10^{-4}$ at $s = 6$, and $\Lpinn$ is within a factor of three on
its surviving seeds. The divergence rates at $s = 6$ follow the same
ordering as on the rectangle ($\Lgam$ 1/20, $\Lcons$ 2/20, $\Lpinn$ 2/20),
but they are uniformly low and their Wilson 95\% intervals overlap heavily.
The disk result is therefore consistent with the rectangle finding and
supports it, but is not statistically resolved at the present seed count.
We report it as suggestive corroboration, with the rectangle carrying the
threshold-robust reliability claim. \Cref{fig:headline} visualises the
per-seed spread on both domains.

\begin{figure}[t]
    \centering
    \includegraphics[width=0.995\linewidth]{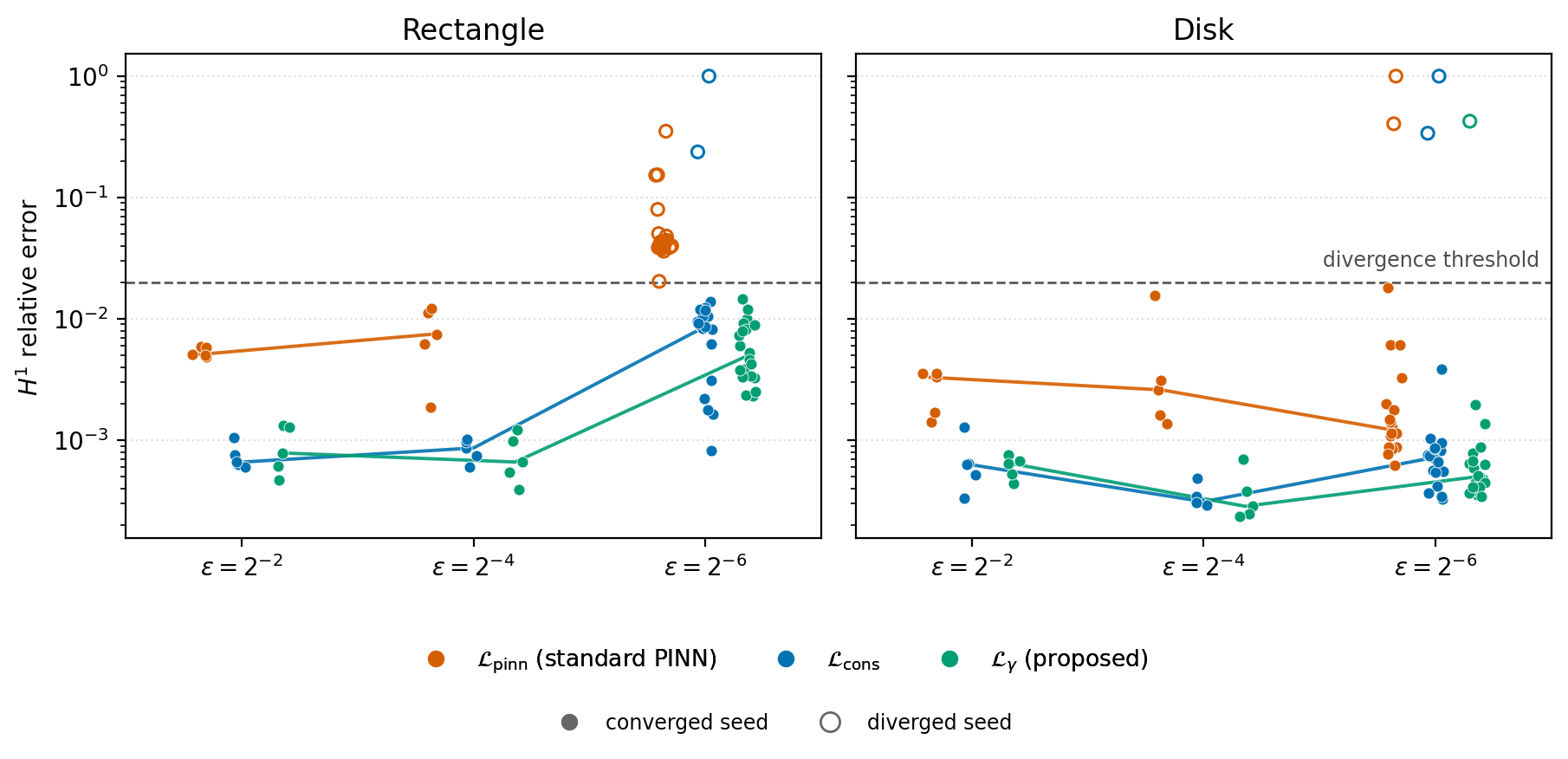}
    \caption{Per-seed $\Hone$ relative error on the rectangle (left) and
    the disk (right) at $\eps = 2^{-s}$, $s \in \{2,4,6\}$. Five seeds at
    $s = 2, 4$; 20 seeds at $s = 6$. Filled markers are converged seeds,
    open markers diverged ($\Hone > 2 \times 10^{-2}$); thin lines pass
    through the median of converged seeds at each $\eps$. On the rectangle
    at $\eps = 2^{-6}$, standard PINN ($\Lpinn$) diverges on every one of
    20 seeds while $\Lgam$ converges on all 20, a complete
    threshold-robust separation. On the disk the same ordering
    ($\Lgam \le \Lcons \le \Lpinn$ in divergence rate) holds but the rates
    are uniformly low and the 95\% confidence intervals overlap. On seeds
    that converge, the three losses are comparable in accuracy across the
    range; the separation is in training reliability, not in the accuracy
    of a converged run.}
    \label{fig:headline}
\end{figure}

\subsection{Experiment 3: Spatial error distribution}
\label{sec:exp-spatial}

To complement the integrated error norms of
\Cref{tab:rectangle,tab:disk},
\Cref{fig:spatial-rect,fig:spatial-disk} show the
pointwise absolute error of $\Lpinn$ and $\Lgam$ at $s = 6$ on
the rectangle and the disk (single representative converged seed). On both
domains the $\Lgam$ error is confined to a thin sliver along the outflow
arc, while the $\Lpinn$ error is delocalised across the bulk, consistent
with the layer-overweighting picture of Remark~\ref{rem:l2-fails}.

\begin{figure}[t]
    \centering
    \includegraphics[width=0.99\linewidth]{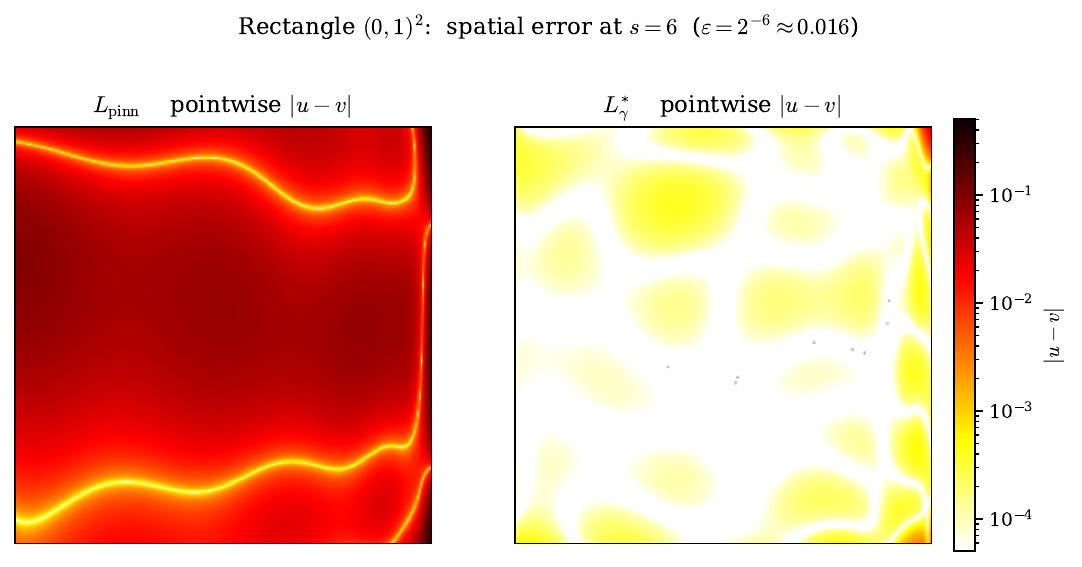}
    \caption{Pointwise absolute error on the rectangle $(0,1)^{2}$ at
     $\eps = 2^{-6}$, single representative seed. Left: standard
    PINN $\Lpinn$. Right: consistent $\Lp{\gamma}$ loss $\Lgam$. Same colour
    scale across both panels. The $\Lpinn$ error is delocalised across the
    bulk, while the $\Lgam$ error is concentrated in the outflow boundary
    layer at the top-right edge.}
    \label{fig:spatial-rect}
\end{figure}

\begin{figure}[t]
    \centering
    \includegraphics[width=0.99\linewidth]{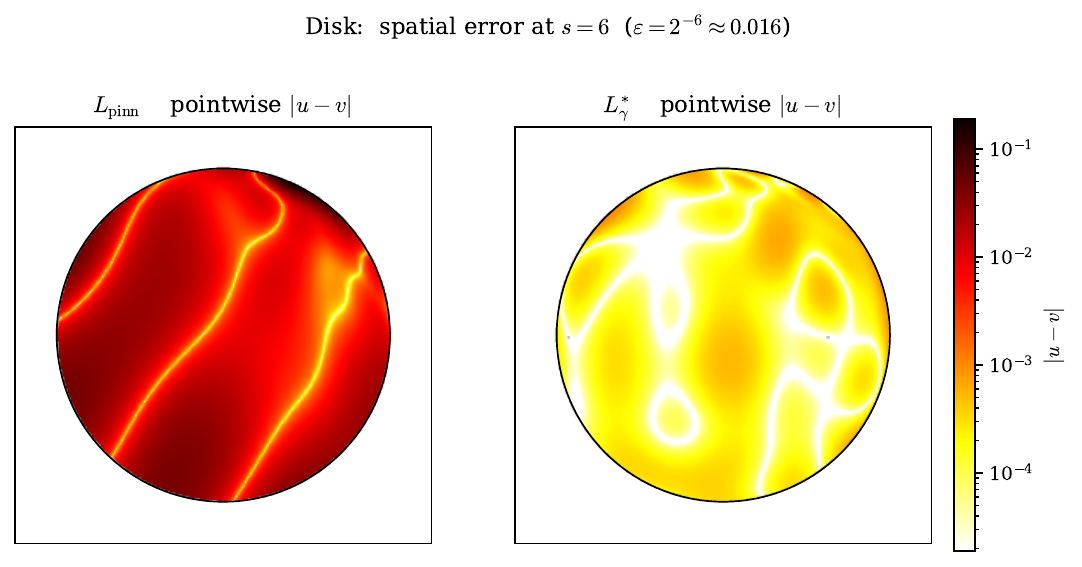}
    \caption{Pointwise absolute error on the disk at $\eps = 2^{-6}$,
    single representative seed. Left: standard PINN
    $\Lpinn$. Right: consistent $\Lp{\gamma}$ loss $\Lgam$. Same colour scale
    across both panels. The $\Lgam$ error is concentrated in a thin arc near
    the outflow region.}
    \label{fig:spatial-disk}
\end{figure}

\section{Concluding Remarks}
\label{sec:conclusion}
This work carries the consistent CutPINN framework of~\cite{singh2026cutpinn}
from the elliptic setting into stationary convection-diffusion on curved
level-set domains, and shows that a single change to the interior norm is
what the new regime demands. Measuring the pointwise residual in an $\Lp{\gamma}$ norm with
$\gamma = 1 + 1/\log\mtil$, in place of $\Lp{2}$, removes the
layer-overweighting that destabilises standard training once convection
dominates, while the boundary trace machinery transfers unchanged. The
method stays a pure PINN throughout: no finite-element reference solution,
no adaptive mesh, and no problem-specific stabilisation. Across the
rectangle and the disk down to $\eps = 2^{-6}$, the $\Lp{\gamma}$ interior
trains reliably on every seed, while the $\Lp{2}$ interiors grow seed-fragile
and diverge on an increasing share of initialisations as the layer sharpens;
where they do converge, all three losses deliver comparable accuracy, so the
gain the new norm buys is reliability. An a priori $\Hone$ bound
(Theorem~\ref{thm:apriori-lgamma}), built on the discrete boundary norm equivalence
as a black box, accompanies and explains these observations. Together these
results identify the interior norm as the lever that governs whether
convection-dominated PINN training succeeds, and give a single formulation
that operates across flat and curved geometry alike.

\subsection*{Ongoing and Future Work}
Several directions extend naturally from here. The a priori
bound~\eqref{eq:apriori-lgamma} carries the convection-diffusion stability
constant $\eps^{-1}$, and sharpening it to an $\eps$-uniform estimate in
the singular-perturbation sense is within reach through a
streamline-diffusion stability analysis on the cut domain, together with a
finer treatment of the boundary chord-arc constants near the outflow arc.
It would also be interesting to study the singularly perturbed regime, where
the layer sharpens further and adaptive collocation, complementary to the
interior-norm correction studied here, becomes relevant. 
The $\Lp{\gamma}$ interior already matches or surpasses several established
enhancements in our experiments, including SUPG-style residual
reweighting~\cite{brooks1982streamline}, anisotropic outflow collocation,
and Fourier-feature embeddings; mapping out the problem classes on which each
of these could reinforce, rather than compete with, the consistent loss is a
worthwhile direction in its own right.

The framework also points beyond the stationary problem. The spatial
discretisation developed here transfers directly to the parabolic
convection-diffusion equation
\begin{equation*}
    u_{t} - \eps \Delta u + \bb \cdot \nabla u + c\,u = f
    \quad \text{in } \dom \times (0, T],
\end{equation*}
under a backward-Euler discretisation in time, in which each step is a
stationary problem of the form~\eqref{eq:cdr} with reaction coefficient
$c + (\Delta t)^{-1}$, precisely the setting analysed here. This
time-dependent extension is the subject of ongoing work.

\bibliographystyle{abbrvnat}
\bibliography{ctpinn}

\end{document}